\newtheorem{thm}{Theorem}[section]
\newtheorem*{thm*}{Theorem}
\newtheorem{lem}[thm]{Lemma}
\newtheorem{pro}[thm]{Proposition}
\theoremstyle{definition}
\newtheorem*{xrem}{Remark}
\numberwithin{equation}{section}
\newcommand{\mbc}{\mathbb{C}}
\newcommand{\mbz}{\mathbb{Z}}
\newcommand{\mbr}{\mathbb{R}}
\newcommand{\mbn}{\mathbb{N}}
\newcommand{\mca}{\mathcal{A}}
\newcommand{\mce}{\mathcal{E}}
\newcommand{\mcp}{\mathcal{P}}
\newcommand{\mcl}{\mathcal{L}}
\newcommand{\mcm}{\mathcal{M}}
\newcommand{\ol}{\overline}
\newcommand{\newabstract}[1]{%
	\par\bigskip
	\csname otherlanguage*\endcsname{#1}%
	\csname captions#1\endcsname
	\item[\hskip\labelsep\scshape\abstractname.]
}
\begin{document}

	\baselineskip=17pt

	\title[Large values of derivatives of the Riemann zeta function on vertical homogeneous progressions]{Large values of derivatives of the Riemann zeta function on vertical homogeneous progressions}

	\author{Qiyu Yang\textsuperscript{1}}
    \author{Shengbo Zhao\textsuperscript{2}}
	\address{1.School of Mathematics and Statistics, Henan Normal University, Xinxiang 453007, CHINA}
	\address{2.School of Mathematical Sciences, Key Laboratory of Intelligent Computing and Applications(Ministry of Education), Tongji University, Shanghai 200092, P. R. China}
	\email{qyyang.must@gmail.com}
	\email{shengbozhao@hotmail.com}

	\begin{abstract} 
	    In this paper, we establish lower bounds for the maximum of
        derivatives of the Riemann zeta function on vertical homogeneous progressions. When the real part $\sigma$ lies within a suitable range, we show that the discrete case has a similar order of magnitude to the continuous case, using the resonance method.
	\end{abstract}
	
    \keywords{Large values, the Riemann zeta function, homogeneous progressions, resonance method. }
	
	\subjclass[2020]{Primary 11M06, 11N37.}
	
	\maketitle

\section{Introduction}
The study of the Riemann zeta function $\zeta(s)$ has always been an important topic in analytic number theory, and the investigation of large values plays a crucial role in describing the value distribution of the Riemann zeta function. Here and throughout, we write $\log_k$ for the $k$-th iterated logarithm. On the critical line, i.e. $\sigma=1/2$, Soundararajan \cite{soundararajan2008extreme} introduced a highly effective technique, known as the resonance method, and proved the following lower bound
$$
\max_{T \le t \le 2T} \Big| \zeta\Big( \frac{1}{2} + it \Big) \Big| \geqslant \exp\bigg( (1 + o(1)) \sqrt{ \frac{\log T}{\log_2 T} } \bigg)
$$
as $T \to \infty$. This method originates Voronin's work  \cite{voronin1988lower}, which was later refined by Bondarenko and Seip \cite{bondarenko2017large,bondarenko2018argument}, who established the following bound for sufficiently large $T,$
$$
\max_{\sqrt{T} \le t \le T} \Big| \zeta\Big( \frac{1}{2} + it \Big) \Big| \geq \exp\bigg( (1+o(1)) \sqrt{ \frac{\log T \log_3 T}{\log_2 T} } \bigg).
$$
Notably, this result was later slightly improved by de la Bretèche and Tenenbaum \cite{tenen2019galsum} using optimized GCD sums. Specifically, the leading term in the exponent was improved from $(1+o(1))$ to $(\sqrt{2}+o(1))$. 
\par
For $\sigma=1$, Aistleitner, Mahatab and Munsch \cite{aistleitner2019extreme} established the following currently best known lower bound when $T \to \infty$,
$$
\max_{\sqrt{T} \le t \le T} |\zeta(1 + it)| \geq e^E (\log_2 T + \log_3 T - c)
$$
for some constant $c$, where $E$ is the Euler–Mascheroni constant. They developed the long resonance method, refining Soundararajan’s original approach. For the case where $1/2 < \sigma < 1$, there are also $\Omega$-results established for $\zeta(s)$ in \cite{aistleitner2016lower,bondarenko2018note,montgomery1977extreme} and so on.
\par
Large values of derivatives of $\zeta(s)$ have been extensively studied by Yang \cite{yang2022extreme,yang2024extreme} in recent years. In this paper, let $\mathbb{N}$ denote the set of all non-negative integers. As $\sigma=1/2$, in \cite{yang2022extreme} he proved that if $T$ is sufficiently large, 
$$\max _{t \in[0, T]}\Big|\zeta^{(j)}\Big(\frac{1}{2}+i t\Big)\Big| \geqslant \exp\bigg(\sqrt{2} \cdot\sqrt{\frac{\log T \log_3 T}{\log_2 T}}\bigg)$$
for all $j \in \mbn$. Furthermore, he also obtained extreme values of $\zeta^{(j)}(\sigma + it)$ in the critical strip $1/2 < \sigma <1$, see \cite[Theorem 2.(B)]{yang2022extreme}. The above results showed that large values of derivatives of $\zeta(s)$ have the same order of magnitude as the currently known extreme values of $\zeta(s)$ itself, up to constants in the exponent.
\par
Later, Yang \cite{yang2024extreme} showed that uniformly for all positive integers $j \le (\log_3 T)/(\log_4 T)$,
$$
\max_{T \le t \le 2T}|\zeta^{(j)}(1 + it)| \ge (Y_j + o(1))(\log_2 T)^{j+1},
$$
where $Y_j = \int_{0}^{\infty}u^j \rho(u) \mathrm{d}u$ and $\rho(u)$ is the Dickman function.
Including Yang's work, almost all above results utilize either Soundararajan’s resonance method or its modified versions. For more results and details about the resonance method, we recommend \cite{bondarenko2023dichotomy,darbar2025large,xumax2024extreme,qiyu2024large} and the references therein.
\par
Building on these works, we study large values of derivatives of $\zeta(s)$ on vertical arithmetic progressions, focusing particularly on homogeneous progressions. Our study is inspired by Li and Radziwiłł \cite{li2015theriemann} and Minelli and Sourmelidis \cite{minelli2025discrete}. Let $\mbr$ denote the set of all real numbers in this paper, and let $\alpha>0$ and $\beta \in \mbr$. Li and Radziwiłł showed that infinitely many positive integers $\ell$ satisfy
$$
\bigg| \zeta\Big( \frac{1}{2} + i(\alpha \ell + \beta) \Big) \bigg| \gg \exp\bigg( (1 + o(1)) \sqrt{ \frac{\log \ell}{6 \log_2 \ell} } \bigg).
$$
\par
Subsequently, Minelli and Sourmelidis continued their research on related problems. To obtain larger values, they restricted $\beta = 0$ and considered lower bounds for the maximum of $|\zeta(1/2+i\alpha \ell)|$ and $|\zeta(1 + i\alpha \ell)|$ on homogeneous progressions, see \cite[Remark 4.1]{minelli2025discrete} for reasons. Using approaches in \cite{bondarenko2017large,bondarenko2018argument} and \cite{aistleitner2019extreme}, they obtained following conclusions respectively. If $N\ge 1$ is sufficiently large, they showed that
$$
\max_{\sqrt{N} \leq \ell \leq N \log N} \bigg| \zeta\Big( \frac{1}{2} + i\alpha \ell \Big) \bigg| \geq \exp\bigg( \frac{1}{\sqrt{2}}\cdot \sqrt{ \frac{\log N \log_3 N}{\log_2 N} } \bigg)
$$
and
$$
\max_{\sqrt{N} \le \ell \le N}|\zeta(1+i \alpha \ell)| \ge e^E(\log_2 N + \log_3 N + O(1)).
$$
\par
Motivated by \cite{dong2025largevalue,minelli2025discrete,yang2022extreme,yang2024extreme}, we aim to obtain large values of derivatives of $\zeta(s)$ on vertical homogeneous progressions near the lines $\sigma=1/2$ and $\sigma =1$. Our results demonstrate that large values on homogeneous progressions achieve the same order of magnitude as those in the continuous case. Henceforth, we assume that $\ell \in \mbz$, where $\mbz$ is the set of all integers.

\begin{thm}\label{thm1}
    Let $j \in \mbn$ and fix $\alpha >0$. Let $A$ be any positive real number and $N$ be sufficiently large. Set     
    $\sigma_A \coloneqq 1/2 + A/(\log_2 N)$.
    We have
    \begin{equation*}
        \max_{\sqrt{N} \le \ell \le N} |\zeta^{(j)}(\sigma_A + i \alpha \ell)| \ge \exp\bigg((\lambda(A)+o(1)) \sqrt{\frac{\log N  \log_3 N }{\log_2 N }}\bigg),
    \end{equation*}
    where the leading term
    $$\lambda(A) = \frac{1}{\sqrt{2}(e-1)e^A}.$$
\end{thm}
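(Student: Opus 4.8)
Here is how I would approach the proof.

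The plan is to run the resonance method of Soundararajan \cite{soundararajan2008extreme} in the long-resonator form of Bondarenko--Seip \cite{bondarenko2017large,bondarenko2018argument}, with the device of Yang \cite{yang2022extreme} for handling the derivative, transplanted from a continuous integral to the discrete average over the homogeneous progression $\{\alpha\ell\}$ as in Minelli--Sourmelidis \cite{minelli2025discrete}. Fix $\alpha>0$, let $w$ be a fixed smooth nonnegative weight with $\operatorname{supp}w\subset(1/2,1]$ and $\int w=1$, and let $R(t)=\sum_{m\le M}r(m)\,m^{-it}$ be a resonator with $r\ge 0$ supported on $m\le M=N^{\theta}$ for a fixed $\theta<1/2$, to be chosen at the very end. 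Since
\[
 \max_{\sqrt N\le\ell\le N}\bigl|\zeta^{(j)}(\sigma_A+i\alpha\ell)\bigr|\ \ge\ \frac{|\mathcal N|}{\mathcal D},\qquad
 \mathcal N\coloneqq\sum_{\ell}w\!\Bigl(\tfrac{\ell}{N}\Bigr)\zeta^{(j)}(\sigma_A+i\alpha\ell)\,|R(\alpha\ell)|^2,\quad
 \mathcal D\coloneqq\sum_{\ell}w\!\Bigl(\tfrac{\ell}{N}\Bigr)|R(\alpha\ell)|^2,
\]
everything reduces to showing $|\mathcal N|/\mathcal D\ge\exp\bigl((\lambda(A)+o(1))\sqrt{\log N\log_3 N/\log_2 N}\bigr)$.

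I would first evaluate the two averages. Expanding $|R(\alpha\ell)|^2=\sum_{m_1,m_2}r(m_1)r(m_2)(m_1/m_2)^{-i\alpha\ell}$ and applying Poisson summation to $\sum_\ell w(\ell/N)(m_1/m_2)^{-i\alpha\ell}$, the diagonal $m_1=m_2$ yields $(1+o(1))N\sum_m r(m)^2$, while the off-diagonal frequencies $\alpha\log(m_1/m_2)$ with $m_1\ne m_2$ stay $\gg N^{-1}$-far from $2\pi\mathbb{Z}$ once $\theta<1/2$ (the only obstructions would be exact resonances $m_1/m_2=e^{2\pi k/\alpha}$, $k\ne 0$, which are excluded since $e^{2\pi k/\alpha}\notin\mathbb{Q}$), so the rapid decay of $\widehat w$ makes those and all aliased terms negligible; hence $\mathcal D=(1+o(1))N\sum_m r(m)^2$. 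For $\mathcal N$ I would insert the approximate functional equation for $\zeta^{(j)}$ at height $\asymp N$, replacing $\zeta^{(j)}(\sigma_A+i\alpha\ell)$ by $(-1)^j\sum_{n\le N^{1/2+o(1)}}(\log n)^j n^{-\sigma_A-i\alpha\ell}$ (the dual term being damped by $|\chi(\sigma_A+i\alpha\ell)|\asymp N^{-A/\log_2 N}$, hence negligible against the main term, and the remainder absorbed by a discrete mean-value estimate proved by the same Poisson summation). Unfolding and applying Poisson once more, only $nm_1=m_2$ survives, so $n=m_2/m_1$ is forced and, summing over $m_1\mid m_2$,
\[
 \frac{\mathcal N}{\mathcal D}=(1+o(1))\,\frac{\displaystyle\sum_{m_1\mid m_2\le M}\bigl(\log(m_2/m_1)\bigr)^{j}\,(m_2/m_1)^{-\sigma_A}\,r(m_1)r(m_2)}{\displaystyle\sum_{m\le M}r(m)^2}
\]
(the sign $(-1)^j$ is harmless because $r\ge 0$ and we pass to $|\mathcal N|$).

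It then remains to choose $r$ to maximise this ratio, which is the multiplicative optimisation of \cite{bondarenko2017large,tenen2019galsum}. Taking $r$ supported on products of primes $p$ from a range (adjusted to $\sigma_A$) with $\log p\asymp\log_2 N$, the ratio becomes, to leading order in the exponent, $\exp\bigl(\sum_p p^{-\sigma_A}f(p)\bigr)$ times a $j$-dependent factor of size $\bigl((\log_2 N)\sum_p p^{-\sigma_A}f(p)\bigr)^{j}=\exp(O_j(\log_3 N))$, to be maximised under the resonator-length constraint. Since each prime carries $p^{-\sigma_A}=p^{-1/2}e^{-(A/\log_2 N)\log p}=p^{-1/2}e^{-A(1+o(1))}$, the optimum acquires the factor $e^{-A}$ relative to the $\sigma=1/2$ quantity, and running the optimisation along the lines of the $\sigma=1/2$ homogeneous case of \cite{minelli2025discrete} --- where the shortened resonator ($\theta$ close to $1/2$) contributes $1/\sqrt2$ and the prime number theorem over the operative range contributes $e-1$ --- gives $\log(\mathcal N/\mathcal D)=(\lambda(A)+o(1))\sqrt{\log N\log_3 N/\log_2 N}$ with $\lambda(A)=1/(\sqrt2\,(e-1)\,e^{A})$. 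The factor $\exp(O_j(\log_3 N))$ is swallowed by the $o(1)$, which is precisely why $\lambda(A)$ is independent of $j$, uniformly in any fixed $j\in\mathbb{N}$.

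The main obstacle is the control, uniform in the fixed $\alpha>0$, of the off-diagonal and aliased contributions to $\mathcal N$. After unfolding one faces the geometric sums $\sum_\ell w(\ell/N)(nm_1/m_2)^{-i\alpha\ell}$ over rationals $nm_1/m_2$ of height up to $N^{1/2+\theta}<N$, which can be of size $\asymp N$ whenever $nm_1/m_2$ lies within $N^{-1}$ of some $e^{2\pi k/\alpha}$. By Poisson each such sum is $\ll N\bigl(1+N\,\|\alpha\log(nm_1/m_2)/(2\pi)\|\bigr)^{-100}$, where $\|\cdot\|$ denotes the distance to $\mathbb{Z}$; one then counts the near-resonant triples by noting that $\|\alpha\log(nm_1/m_2)/(2\pi)\|\le N^{-1+\varepsilon}$ forces $|nm_1-e^{2\pi k/\alpha}m_2|$ small with $|k|\ll\log N$, so the divisor bound keeps their $r$-weighted total to $O(N^{1-\delta}\sum_m r(m)^2)$. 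Here the transcendence $e^{2\pi k/\alpha}\notin\mathbb{Q}$ rules out exact resonances for every fixed $\alpha$ (including rational ones) once $N$ is large, while the condition $\theta<1/2$ keeps distinct admissible ratios $\gg N^{-1}$-separated; it is exactly the tension between this condition and keeping the resonator long enough to retain the $\sqrt{\log_3 N}$-gain, together with the loss from the factor $p^{-(\sigma_A-1/2)}$, that pins the constant down to $\lambda(A)$.
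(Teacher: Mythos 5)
Your overall strategy is the same as the paper's (resonance method on the progression $\{\alpha\ell\}$, truncated Dirichlet series for $\zeta^{(j)}$, Bondarenko--Seip optimisation), and you correctly identify where each factor of $\lambda(A)$ comes from: $1/\sqrt{2}$ from the resonator length $N^{\theta}$, $\theta\to 1/2$; $1/(e-1)$ from the prime range; $e^{-A}$ from $(\log N)^{-(\sigma_A-1/2)}=e^{-A}$. However, there is a genuine gap in your treatment of the discrete-to-continuous passage. You claim that $\mathcal D=(1+o(1))N\sum_m r(m)^2$ and that the off-diagonal/aliased terms are negligible because ``the only obstructions would be exact resonances $m_1/m_2=e^{2\pi k/\alpha}$, $k\neq 0$, which are excluded since $e^{2\pi k/\alpha}\notin\mathbb{Q}$.'' This is false for an arbitrary fixed real $\alpha>0$: for $\alpha=\pi/\log 2$ one has $e^{2\pi k/\alpha}=4^{k}\in\mathbb{Q}$, so exact resonances $m_1=4^{k}m_2$ occur and contribute at full size $\asymp N\sum_m r(m)r(4^km)$, destroying the claimed asymptotic. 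Even when $e^{2\pi k/\alpha}$ is irrational for all $k\neq 0$, irrationality gives no quantitative separation, so ``stay $\gg N^{-1}$-far from $2\pi\mathbb{Z}$'' does not follow ($\alpha$ can be chosen so that $e^{2\pi/\alpha}$ is Liouville); your subsequent divisor-bound count of near-resonant triples therefore cannot be carried out uniformly in $\alpha$. The same issue contaminates $\mathcal N$, where with a general smooth weight $w$ the transform $\widehat w$ is not sign-definite, so the unexcluded resonant terms could in principle cancel the main term rather than merely perturb it.

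The paper avoids this entirely by two devices you should adopt. First, it takes the Gaussian weight $\Phi(y)=e^{-y^2/2}$, so $\widehat\Phi>0$; since the resonator coefficients and the coefficients $(\log n)^j n^{-\sigma}$ are nonnegative, \emph{every} term produced by Poisson summation is nonnegative, and one may simply discard all $\ell\neq 0$ terms to get the required \emph{lower} bounds on $S_1$ and $S_2$ as continuous integrals --- no Diophantine information about $\alpha$ is needed. Second, for the \emph{upper} bound on the denominator it does not prove an asymptotic at all: it applies Gallagher's lemma (Lemma \ref{lemmaGallagher}) with $g(t)=R(\alpha t)\sqrt{\Phi(t\log N/N)}$ to bound the discrete sum by continuous mean squares of $R$ and $R'$, obtaining $S_1\ll N\log N\sum_m r(m)^2$ versus the lower bound $\gg (N/\log N)\sum_m r(m)^2$; the resulting loss of $(\log N)^2$ in $S_2/S_1$ is absorbed into the $o(1)$ in the exponent. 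With these replacements (plus the paper's use of the discrete mean-square bound of Lemma \ref{lemmameansquare} to dispose of the range $|\ell|<\sqrt N$, which your compactly supported weight conveniently sidesteps), your argument goes through and yields the stated constant.
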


Compared with \cite[Theorem 2.(A)]{yang2022extreme}, our Theorem \ref{thm1} shows that when $0< \sigma - 1/2 \ll (\log_2 N)^{-1}$, the large value of $\zeta^{(j)}(\sigma + i \alpha \ell)$ can reach a similar order of magnitude as those of $\zeta^{(j)}(1/2 + it)$ as $N \to \infty$, while the leading constant in the exponent is slightly smaller.

\begin{xrem}
    In fact, we can obtain the large values over any interval $[N^\theta, N], 0 < \theta <1$ by an almost identical method. In this case, the leading term in the exponent $\lambda(A)=\lambda(A, \theta)$ will depend on $\theta$. For simplicity, we consider only the case $\theta = 1/2.$
\end{xrem}

Then, we state the following result. Compared with \cite[Theorem 3]{dong2025largevalue}, Theorem \ref{thm2} yields a same lower bound when $0< 1- \sigma \ll (\log_2 N)^{-1}$. This also indicates that the currently known large values in the continuous case can also be realized on vertical homogeneous progressions.

\begin{thm}\label{thm2}
    Let $\alpha >0$ be fixed. Let $A$ be any positive real number and $N$ be sufficiently large. Set     
    $\sigma^{\prime}_A \coloneqq 1 - A/(\log_2 N)$.
    We have
    \begin{equation*}
        \max_{\sqrt{N} \le \ell \le N} |\zeta^{(j)}(\sigma^{\prime}_A + i \alpha \ell)| \ge (D_j(A) + o(1))(\log_2 N)^{j+1}
    \end{equation*}
    uniformly for all positive integers $j \le (\log_3 N)/(\log_4 N),$
    where 
    $$D_j(A) = \int_{0}^{\infty}e^{Au}u^j \rho(u) \mathrm{d}u.$$
\end{thm}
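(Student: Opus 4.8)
The plan is to run the long resonance method of Aistleitner--Mahatab--Munsch, as adapted by Yang \cite{yang2024extreme} to handle derivatives near $\sigma=1$ and by Minelli--Sourmelidis \cite{minelli2025discrete} to handle homogeneous progressions, and to combine the two adaptations. Let $L=\log_2 N$ and let $\mathcal{P}$ be the set of primes in an interval $(\log N\,(\log_2 N)^{?}, \exp((\log_2 N)/(\log_3 N))]$ of "long" type, so that $\prod_{p\in\mathcal P}(1+1/p)$ is of order $\log_2 N$ up to lower-order factors. Define the resonator coefficients $r(n)=\prod_{p\mid n}f(p)$ supported on squarefree $n$ built only from primes in $\mathcal P$, with $f(p)$ chosen (as in \cite{aistleitner2019extreme,yang2024extreme}) so that the Euler product $\prod_{p\in\mathcal P}(1+f(p)/p^{\sigma'_A})$ is maximised subject to a length constraint $\prod_{p\in\mathcal P}\max(1,f(p))\le N^{\eta}$ for small $\eta>0$; the point is that with $\sigma'_A=1-A/L$ one has $p^{-\sigma'_A}=p^{-1}e^{A\log p/L}$, and the extra factor $e^{A\log p/L}$ over the relevant range of $p$ is what ultimately produces the weight $e^{Au}$ inside the integral defining $D_j(A)$.

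Next I would set up the discrete resonance identity. Following \cite{minelli2025discrete} and \cite{li2015theriemann}, one works with the sum $\sum_{\ell}\zeta^{(j)}(\sigma'_A+i\alpha\ell)\,|R(\alpha\ell)|^2\,\Phi(\ell/N)$ for a smooth cutoff $\Phi$ supported near $[\sqrt N,N]$, where $R(t)=\sum_n r(n)n^{-it}$. Expanding $|R(\alpha\ell)|^2=\sum_{m,n}r(m)r(n)(m/n)^{-i\alpha\ell}$ and applying Poisson summation in $\ell$ converts the average over the progression into the continuous integral $\int \zeta^{(j)}(\sigma'_A+i\alpha y)\,(m/n)^{-i\alpha y}\Phi(y/N)\,dy$ plus the contribution of nonzero frequencies; the diagonal $m=n$ reproduces, after a contour shift and use of the Dirichlet series $\zeta^{(j)}(s)=(-1)^j\sum_k (\log k)^j k^{-s}$, the main term
\begin{equation*}
(-1)^j\sum_{m,n}r(m)r(n)\sum_{k}\frac{(\log k)^j}{k^{\sigma'_A}}\cdot(\text{contribution of }k=m/n),
\end{equation*}
and standard estimates on $f(p)$ show this is $\gg (D_j(A)+o(1))(\log_2 N)^{j+1}\sum_n r(n)^2$. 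The nonzero frequencies $m\neq n$, and the off-diagonal $k\neq 1$ terms, must be shown to be negligible: here one uses that $r$ is supported on numbers all of whose prime factors exceed $\log N$ times a growing factor, so any nontrivial ratio $m/n$ is either very close to $1$ (forcing $m=n$ since both are integers built from large primes, an argument as in \cite{li2015theriemann,minelli2025discrete}) or bounded away from $1$ in a way that makes the oscillatory integral small; and the length restriction $N^{\eta}$ controls the $k\neq 1$ tail.

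The uniformity in $j\le(\log_3 N)/(\log_4 N)$ requires care: one must track the $j$-dependence through the Dirichlet polynomial $\sum_k (\log k)^j k^{-\sigma'_A}$ and through the truncation of $\zeta^{(j)}$, exactly as in \cite{yang2024extreme}, ensuring all error terms lose at most a factor $(\log_2 N)^{o(1)}$ uniformly in this range, and identifying $\lim_{N} (\log_2 N)^{-(j+1)}\sum_{p\leq y}\cdots$ with $\int_0^\infty e^{Au}u^j\rho(u)\,du$ via the standard Mellin/Dickman asymptotics for $\Psi$-type sums over primes weighted by $e^{A\log p/\log_2 N}$. Finally, dividing by $\sum_\ell |R(\alpha\ell)|^2\Phi(\ell/N)$ — which by the same Poisson argument is $\asymp N\sum_n r(n)^2$ — yields a lower bound for the weighted average of $|\zeta^{(j)}(\sigma'_A+i\alpha\ell)|$, hence for the maximum. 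I expect the main obstacle to be the off-diagonal analysis: showing that the homogeneous-progression constraint $\beta=0$ together with the "long" support of $r$ forces the relevant ratios $m/n$ to collapse to the diagonal, so that the discrete average genuinely matches the continuous one; this is precisely the step that fails for general $\beta$ and is the reason \cite{minelli2025discrete} restricts to $\beta=0$, and it must be re-examined with the derivative $\zeta^{(j)}$ and the shifted line $\sigma'_A$ in place.
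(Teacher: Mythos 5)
Your proposal takes a genuinely different route from the paper, and that route contains a gap that breaks the stated constant. The paper does \emph{not} use the Aistleitner--Mahatab--Munsch long resonator here: it uses a short Soundararajan-type resonator $R(t)=\sum_{m\le M}r(m)m^{-it}$ with $M=\lfloor N^{1/2}\rfloor$ and $r$ the $\{0,1\}$-valued indicator of a set $\mathcal M$ of smooth-type integers (imported from the cited work of Dong et al.), together with a compactly supported bump $\Phi$ and Poisson summation; the constant $D_j(A)$ is then supplied wholesale by Proposition \ref{pro4.1}. Because $m,n\le N^{1/2}$, every nonzero Poisson frequency and every non-resonant term $n\neq km$ satisfies $N\,|\log(n/(km))|\gg N^{1/2}$, so the off-diagonal is killed trivially by the decay of $\widehat\Phi$; no diophantine separation of ratios of integers built from large primes (the step you flag as the main obstacle) is needed at all in this regime.

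The substantive gap is your main-term evaluation. The constant $D_j(A)=\int_0^\infty e^{Au}u^j\rho(u)\,\mathrm{d}u$ contains the Dickman function, which enters only through the density of $y$-smooth numbers: with $r$ the indicator of $y$-smooth integers up to $\sqrt N$ (where $\log y\asymp\log_2N$), the resonance sum $\sum_{mk=n\le\sqrt N}r(m)\overline{r(n)}k^{-\sigma}(\log k)^j$ is governed by $\Psi(\sqrt N/k,y)/\Psi(\sqrt N,y)\approx\rho(u)$ with $k=y^u$, and $k^{-\sigma'_A}=k^{-1}k^{A/\log_2N}$ supplies the $e^{Au}$. The AMM-optimized multiplicative coefficients $f(p)$ do not produce this: their main term is an Euler-product/Mertens computation over $\mathcal P$ and, for $j\ge1$, yields constants of the shape $e^{\gamma}j^j(j+1)^{-(j+1)}$ (this is \cite[Theorem 1]{yang2022extreme}); the whole point of \cite{yang2024extreme}, which you cite as the source of your $f(p)$, is that it \emph{abandons} the $f(p)$-optimization in favour of the smooth-number indicator precisely in order to upgrade that constant to $Y_j=D_j(0)$. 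So the assertion that ``standard estimates on $f(p)$ show this is $\gg(D_j(A)+o(1))(\log_2N)^{j+1}\sum_nr(n)^2$'' is unjustified and, for your resonator, false. A secondary structural error: for $j\ge1$ the diagonal $m=n$ (i.e.\ $k=1$) contributes nothing since $(\log 1)^j=0$; the entire main term comes from the resonance terms $n=km$ with $k>1$, so identifying ``the diagonal $m=n$'' as the source of the main term is wrong. Your instinct about where $e^{Au}$ comes from (the shift $\sigma'_A=1-A/\log_2N$ acting on the complementary divisor) is correct, but it must be attached to the smooth-number construction, not to the long resonance method.
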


\begin{xrem}
In \cite[Theorem 1]{yang2024extreme}, Yang showed that the number of positive integers $j$ for which $|\zeta^{(j)}(1+it)|$ takes larger values in the range $t \in [T,2T]$ is smaller compared to \cite[Theorem 1]{yang2022extreme}. In \cite{yang2024extreme}, he removed the term $-\log_3 T$ and improved the leading term from $e^{\gamma}\cdot j^j \cdot (j+1)^{-(j+1)}$ to $Y_j = D_j(0).$ Moreover, Dong and Wei \cite{dong2021large} further refined Yang's result in \cite{yang2022extreme}. A similar argument applies to $|\zeta^{(j)}(\sigma^{\prime}_A + i \alpha \ell)|$. By slightly modifying the proof in \cite[Section 3]{yang2022extreme}, specifically by adjusting the choices of $x$ and
$b$, we can almost obtain a result similar to \cite[Theorem 1]{yang2022extreme}. Although this result yields a smaller lower bound, it holds for more positive integers $j$. We do not pursue this further, as Theorem \ref{thm2} suffices to illustrate the point discussed in the introduction.  
\end{xrem}
\par
Finally, we introduce some notations. Let $p$ be a prime number, and let $\mbc$ denote the set of all complex numbers. Let $\varepsilon > 0$ be an arbitrarily small number. We remark that each occurrence of $\varepsilon$ may represent a different value. Finally, we denote the Fourier transform of a function $f \in L^1(\mathbb{R})$ as
$$\widehat{f}(\xi) := \int_{\mathbb R} f(x) e^{-2\pi i  \xi x} \mathrm{d}x.$$

\section{Preliminary lemmas}
In this section, we present several lemmas that will be used subsequently. We begin with the following asymptotic formula for derivatives of $\zeta(s)$.
\begin{lem}[\cite{yang2024extreme}, Lemma 1]\label{lemmaYangdaodao}
    Fix $\sigma_0 \in (0,1)$. If $T$ is sufficiently large, then uniformly for $\varepsilon >0$, $t \in [T,2T]$, $\sigma \in [\sigma_0+ \varepsilon, \infty)$ and all $j \in \mbn$, we have
    \begin{equation*}
        (-1)^j \zeta^{(j)}(\sigma + it) = \sum_{n \leqslant T} \frac{(\log n)^j}{n^{\sigma + it}} + O\Big( \frac{j!}{\varepsilon^j} \cdot T^{-\sigma + \varepsilon} \Big),
    \end{equation*}
    where the implied constant only depends on $\sigma_0$.
\end{lem}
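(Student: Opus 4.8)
The plan is to deduce the formula for $\zeta^{(j)}$ from the classical truncated expansion of $\zeta$ itself, passing to the $j$-th derivative by Cauchy's integral formula over a circle of radius exactly $\varepsilon$ centred at $s$; it is this choice of radius that will produce the factor $j!/\varepsilon^{j}$ in the error term, together with its uniformity in $j$. First I would recall the classical approximation of the zeta function by a Dirichlet polynomial (see, e.g., Titchmarsh, \emph{The Theory of the Riemann Zeta-Function}, Theorem~4.11): there is a constant depending only on $\sigma_0$ such that, for every real $x \ge 1$ and every $w \in \mbc$ with $\re(w) \ge \sigma_0$ and $|\im(w)| \le 2\pi x$,
\begin{equation*}
\zeta(w) = \sum_{n \le x} \frac{1}{n^{w}} - \frac{x^{1-w}}{1-w} + O\!\left(x^{-\re(w)}\right).
\end{equation*}

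Next I would fix $s = \sigma + it$ with $t \in [T,2T]$ and $\sigma \ge \sigma_0 + \varepsilon$. Since $|1-s| \ge t \ge T$, for $T$ large the closed disc $\{\,w : |w-s| \le \varepsilon\,\}$ avoids the pole of $\zeta$ at $w=1$, so Cauchy's integral formula for derivatives gives
\begin{equation*}
\zeta^{(j)}(s) = \frac{j!}{2\pi i} \oint_{|w-s| = \varepsilon} \frac{\zeta(w)}{(w-s)^{j+1}} \, \mmd w .
\end{equation*}
On that circle one has $\re(w) \ge \sigma - \varepsilon \ge \sigma_0 > 0$ and $|\im(w)| \le 2T + \varepsilon \le 2\pi T$, so the approximation of the previous paragraph applies with $x = T$ at every point of the contour.

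Substituting it, the main term becomes
\begin{equation*}
\frac{j!}{2\pi i} \sum_{n \le T} \oint_{|w-s| = \varepsilon} \frac{\mmd w}{n^{w}\,(w-s)^{j+1}} = \sum_{n \le T} (-\log n)^{j}\, n^{-s} = (-1)^{j} \sum_{n \le T} \frac{(\log n)^{j}}{n^{s}} ,
\end{equation*}
because, by Cauchy's formula, the $n$-th integral is the $j$-th derivative at $s$ of the entire function $w \mapsto n^{-w}$. For the two remaining pieces of the approximation, on the contour $\re(w) \ge \sigma - \varepsilon$ and $|1-w| \ge t - \varepsilon \gg T$, whence $\left| x^{1-w}/(1-w) \right| = T^{1-\re(w)}/|1-w| \ll T^{1-\sigma+\varepsilon}/T = T^{-\sigma+\varepsilon}$ and $x^{-\re(w)} \le T^{-\sigma+\varepsilon}$; since the contour has length $2\pi\varepsilon$ and $|w-s|^{-(j+1)} = \varepsilon^{-(j+1)}$ on it, each of these two pieces contributes $O\!\left( j!\,\varepsilon^{-j}\,T^{-\sigma+\varepsilon} \right)$, with implied constant depending only on $\sigma_0$. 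Multiplying through by $(-1)^{j}$ yields the assertion, uniformly in $\varepsilon > 0$, $t \in [T,2T]$, $\sigma \ge \sigma_0 + \varepsilon$ and $j \in \mbn$.

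I do not anticipate a genuine difficulty here. The two points deserving care are: (i) checking that the Cauchy contour may be taken of radius exactly $\varepsilon$, which is precisely what the hypothesis $\sigma \ge \sigma_0 + \varepsilon$ ensures — it keeps every point $w$ of the circle inside the half-plane $\re(w) \ge \sigma_0$ — so that the error inherits the exact shape $j!\,\varepsilon^{-j}\,T^{-\sigma+\varepsilon}$ with full uniformity in $j$; and (ii) the single non-trivial external input, namely the classical truncated formula for $\zeta$ with a summation range merely comparable to, rather than much larger than, $|\im(w)|$, whose tail estimate relies on the oscillation of $u \mapsto u^{-i\,\im(w)}$ in the range $u \gtrsim |\im(w)|$.
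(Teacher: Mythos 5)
Your argument is correct and is essentially the proof given in the cited source for this lemma: the truncated Dirichlet series for $\zeta$ (Titchmarsh, Theorem 4.11) followed by Cauchy's integral formula on the circle of radius exactly $\varepsilon$ centred at $s$, with the hypothesis $\sigma \ge \sigma_0 + \varepsilon$ guaranteeing that the whole contour stays in the half-plane where the approximation applies. The only implicit restriction worth noting is that $\varepsilon$ must be small compared with $T$ (so that the disc avoids the pole at $w=1$ and the condition $x \gg |\mathrm{Im}\, w|$ in Titchmarsh's theorem holds), but for larger $\varepsilon$ the asserted estimate is vacuous anyway.
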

The next lemma is a key tool the subsequent calculations, providing an effective upper bound for mean squares.
\begin{lem}[\cite{montgomery1971topics}, Lemma 1.4]\label{lemmaGallagher}
    If $0<W<V-2$ and $g \, \text{:} \, [W,V] \to \mbc$ is continuously differentiable, then we have
    \begin{equation*}
        \sum_{W+1 \le n \le V-1}|g(n)|^2 \ll \int_{W}^{V}|g(t)|^2 \mathrm{d}t + \Big(\int_{W}^{V}|g(t)|^2 \mathrm{d}t \cdot \int_{W}^{V}|g^{\prime}(t)|^2 \mathrm{d}t \Big)^{\frac{1}{2}}.
    \end{equation*}
\end{lem}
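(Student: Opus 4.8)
The plan is to prove this as a classical Sobolev-type inequality: for each integer $n$ in the range I would compare the point value $|g(n)|^2$ with the local $L^2$-average of $|g|^2$ over the unit interval centred at $n$, and control the discrepancy by $g'$.

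First I would fix an integer $n$ with $W+1\le n\le V-1$ and set $I_n:=[n-1/2,\,n+1/2]$. The hypotheses $0<W<V-2$ together with $n\in[W+1,V-1]$ force $I_n\subseteq[W,V]$, since $n-1/2\ge W+1/2>W$ and $n+1/2\le V-1/2<V$. For any $t\in I_n$, the fundamental theorem of calculus applied to the real-valued $C^1$ function $u\mapsto|g(u)|^2$ gives
\begin{equation*}
|g(n)|^2=|g(t)|^2+\int_t^n\frac{\mathrm{d}}{\mathrm{d}u}|g(u)|^2\,\mathrm{d}u .
\end{equation*}
Since $\frac{\mathrm{d}}{\mathrm{d}u}|g(u)|^2=2\,\re\!\big(\overline{g(u)}\,g'(u)\big)$, one has the pointwise bound $\big|\frac{\mathrm{d}}{\mathrm{d}u}|g(u)|^2\big|\le 2|g(u)|\,|g'(u)|$. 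Integrating the identity in $t$ over $I_n$ (an interval of length $1$) and enlarging the inner integration interval from $[t,n]$ to $I_n$ would then yield
\begin{equation*}
|g(n)|^2\le\int_{I_n}|g(t)|^2\,\mathrm{d}t+2\int_{I_n}|g(u)|\,|g'(u)|\,\mathrm{d}u .
\end{equation*}

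Next I would sum this over the integers $n\in[W+1,V-1]$. Since the $I_n$ are pairwise disjoint up to endpoints and all contained in $[W,V]$, the right-hand integrals combine into
\begin{equation*}
\sum_{W+1\le n\le V-1}|g(n)|^2\le\int_W^V|g(t)|^2\,\mathrm{d}t+2\int_W^V|g(u)|\,|g'(u)|\,\mathrm{d}u .
\end{equation*}
Finally, applying the Cauchy--Schwarz inequality to the last integral bounds it by $\big(\int_W^V|g(t)|^2\,\mathrm{d}t\big)^{1/2}\big(\int_W^V|g'(t)|^2\,\mathrm{d}t\big)^{1/2}$, which is exactly the claimed estimate with an absolute implied constant (one may in fact take the constant $2$).

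I do not expect any substantive obstacle here; the argument is entirely elementary. The only points needing care are verifying that each unit interval $I_n$ sits inside $[W,V]$ --- which is precisely what the condition $0<W<V-2$ (together with $n$ ranging over $[W+1,V-1]$) guarantees, and which also makes the inequality trivially true when $[W+1,V-1]$ contains no integer --- and observing that all the integrals appearing are finite because $g$ and $g'$ are continuous on the compact interval $[W,V]$.
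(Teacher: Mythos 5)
Your proof is correct, and it is essentially the classical argument. The paper itself does not prove this lemma; it merely cites Lemma~1.4 of Montgomery's \emph{Topics in Multiplicative Number Theory}, whose proof is precisely the Sobolev-type argument you give: apply the fundamental theorem of calculus to $u\mapsto|g(u)|^2$ (using $|(|g|^2)'|\le 2|g|\,|g'|$), average over the unit interval $I_n$ centred at $n$, note that the hypotheses place every $I_n$ inside $[W,V]$ with the $I_n$ essentially disjoint, sum over $n$, and finish with Cauchy--Schwarz. This yields the stated bound with the explicit absolute constant $2$.
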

Then, we can obtain the following upper bound for the discrete mean square of $\zeta^{(j)}(s)$.
\begin{lem}\label{lemmameansquare}
    Fix $j \in \mbn, \alpha>0$ and $\sigma \in (1/2, 1)$. For any sufficiently large $N \ge 1$, then 
    \begin{equation*} 
        \sum_{\ell \le N}|\zeta^{(j)}(\sigma + i\alpha\ell)|^2 \ll N,
    \end{equation*}
    where the implied constant only depends on $\sigma$.
\end{lem}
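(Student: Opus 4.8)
The plan is to combine Lemma~\ref{lemmaYangdaodao} (to replace $\zeta^{(j)}$ by a short Dirichlet polynomial), Lemma~\ref{lemmaGallagher} (to pass from a discrete sum over the arithmetic progression $\alpha\ell$ to a continuous integral), and the classical mean-value estimate for Dirichlet polynomials. First I would write, for $\ell\in[N,2N]$ say (the range $\ell\le N$ being recovered by a dyadic decomposition, with the contribution of small $\ell$ trivially $O(N)$),
\begin{equation*}
    (-1)^j\zeta^{(j)}(\sigma+i\alpha\ell)=\sum_{n\le 2\alpha N}\frac{(\log n)^j}{n^{\sigma+i\alpha\ell}}+O\!\left(\frac{j!}{\varepsilon^j}(2\alpha N)^{-\sigma+\varepsilon}\right),
\end{equation*}
choosing $\varepsilon$ small enough (in terms of $\sigma$) that the error term is $O(N^{-\delta})$ for some $\delta>0$; since $j$ is fixed, $j!/\varepsilon^j$ is an absolute constant and the total contribution of the error over all $\ell\le N$ is $O(N^{1-2\delta})=O(N)$. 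So it suffices to bound $\sum_{N\le\ell\le 2N}|D(\alpha\ell)|^2$ where $D(t):=\sum_{n\le 2\alpha N}(\log n)^j n^{-\sigma-it}$.

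Next I would set $g(t):=D(t/\alpha)$ for $t$ in the interval $[\alpha N,2\alpha N]$ (so that $g(\alpha\ell)=D(\ell\cdot\alpha)$ matches the summands after reindexing — more precisely I apply Lemma~\ref{lemmaGallagher} with the variable $u=\alpha\ell$ running over the arithmetic progression of common difference $\alpha$; a clean way is to absorb $\alpha$ by a change of variable, noting $\alpha$ is fixed so the endpoints shift by bounded factors and the hypothesis $W<V-2$ holds for large $N$). Applying Gallagher's lemma gives
\begin{equation*}
    \sum_{N\le\ell\le 2N}|D(\alpha\ell)|^2\ll \int_{\alpha N}^{2\alpha N}|D(t)|^2\,\mathrm{d}t+\left(\int_{\alpha N}^{2\alpha N}|D(t)|^2\,\mathrm{d}t\cdot\int_{\alpha N}^{2\alpha N}|D'(t)|^2\,\mathrm{d}t\right)^{1/2},
\end{equation*}
up to a harmless factor depending on $\alpha$ from the rescaling. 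Then I invoke the standard mean value theorem for Dirichlet polynomials, $\int_0^{\mathcal T}|\sum_{n\le M}a_n n^{-it}|^2\,\mathrm{d}t=(\mathcal T+O(M))\sum_{n\le M}|a_n|^2$, with $M=2\alpha N$ and $\mathcal T\asymp N$, so $M\asymp\mathcal T$ and the factor is $O(N)$. With $a_n=(\log n)^j n^{-\sigma}$ and $\sigma>1/2$, the coefficient sum $\sum_{n\le 2\alpha N}(\log n)^{2j}n^{-2\sigma}$ converges (here $\sigma>1/2$ is exactly what is used), giving $\int_{\alpha N}^{2\alpha N}|D(t)|^2\,\mathrm{d}t\ll N$. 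For the derivative, $D'(t)=-i\sum_{n\le 2\alpha N}(\log n)^{j+1}n^{-\sigma-it}\cdot(-1)$ has coefficients $(\log n)^{j+1}n^{-\sigma}$, so again $\sum_n(\log n)^{2j+2}n^{-2\sigma}<\infty$ and $\int|D'(t)|^2\ll N$. Plugging back, the right-hand side of Gallagher is $\ll N+(N\cdot N)^{1/2}=O(N)$, which is the claimed bound; summing the dyadic pieces $N,N/2,N/4,\dots$ costs only another $O(1)$ factor by a geometric series, so the full sum over $\ell\le N$ is $O(N)$.

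The main obstacle — really the only non-routine point — is the bookkeeping around the arithmetic progression spacing $\alpha$ in Gallagher's lemma: Lemma~\ref{lemmaGallagher} is stated for integer arguments $n$, whereas we need it for the progression $\{\alpha\ell\}$. This is handled either by the substitution $t\mapsto\alpha t$ as above (tracking that the endpoints and the separation condition $W<V-2$ survive, which they do for $N$ large since $\alpha$ is a fixed positive constant), or directly by Gallagher's lemma in its general form for sequences spaced by $\delta=\alpha$. One should also double-check that the truncation length $2\alpha N$ in Lemma~\ref{lemmaYangdaodao} is admissible: the lemma allows summing up to $T$ for $t\in[T,2T]$, and with $t=\alpha\ell\in[\alpha N,2\alpha N]$ we take $T=\alpha N$, so the sum runs to $\alpha N$ and everything is consistent after adjusting constants. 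Everything else — the Dirichlet polynomial mean value, the convergence of the coefficient sums, the dyadic summation — is entirely standard, so I expect no real difficulty beyond these routine checks.
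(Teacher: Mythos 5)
Your proof is correct, but it takes a different (and longer) route than the paper. The paper's proof is two lines: it quotes the classical Hadamard--Landau--Schnee asymptotic $\int_0^T|\zeta^{(j)}(\sigma+it)|^2\,\mathrm{d}t\sim\zeta^{(2j)}(2\sigma)T$ and applies Lemma \ref{lemmaGallagher} directly with $g(t)=\zeta^{(j)}(\sigma+i\alpha t)$, so that $g'(t)=i\alpha\zeta^{(j+1)}(\sigma+i\alpha t)$ and the second integral is handled by the same classical result with $j+1$ in place of $j$. You instead first pass to a Dirichlet polynomial via Lemma \ref{lemmaYangdaodao} and then invoke the Montgomery--Vaughan mean value theorem; this is sound (the coefficient sums $\sum_n(\log n)^{2j}n^{-2\sigma}$ and $\sum_n(\log n)^{2j+2}n^{-2\sigma}$ do converge precisely because $\sigma>1/2$, and the error term is harmless for fixed $j$), and it has the mild advantage of not requiring the second-moment asymptotic for $\zeta(s)$ and its derivatives as a black box, at the cost of an extra approximation step, a dyadic decomposition, and an appeal to a mean value theorem not stated in the paper. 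Two small points to tidy up: your substitution should be $g(t)=D(\alpha t)$ on $[N,2N]$ (so that $g(\ell)=D(\alpha\ell)$), not $g(t)=D(t/\alpha)$ on $[\alpha N,2\alpha N]$ — as written $g(\alpha\ell)=D(\ell)$, which is not what you want, though you correctly identify the right fix in words; and you should note that the bottom finitely many dyadic blocks (where Lemma \ref{lemmaYangdaodao} is not applicable because $T$ is not large) contribute $O(1)$ since $\zeta^{(j)}$ is bounded on the corresponding compact set away from the pole.
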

\begin{proof}
    For fixed $j \in \mbn$ and $\sigma \in (1/2, 1)$, the following classical result on the second moments of $\zeta^{(j)}(\sigma+it)$ holds, due to Hadamard, Landau, and Schnee (see \cite[Eq. (14)]{yang2024extreme}),
    \begin{equation*}
        \int_{0}^{T}|\zeta^{(j)}(\sigma + it)|^2 \mathrm{d}t \sim \zeta^{(2j)}(2\sigma)T.
    \end{equation*}
    In Lemma \ref{lemmaGallagher}, we set $g(t) = \zeta^{(j)}(\sigma + i\alpha t)$. Then combining with the above relation, we complete the proof of this lemma via simple calculations.
\end{proof}

\section{Proof of the Theorem \ref{thm1}}
\subsection{Constructing the resonator}\label{secConstucting}
We write $\sigma = \sigma_A = 1/2 + A/(\log_2 N)$ in this section. We use the resonance method which improved by Bondarenko and Seip \cite{bondarenko2017large,bondarenko2018argument}. In fact, our construction is very similar to the one in \cite{chirre2019extreme} by Chirre. 
\par
Let $\gamma \in (0,(e-1)^{-1})$ be a parameter to be chosen later and $N$ be a large number, then we define
$$
\mcp \coloneqq \{p : e \log N \log_{2}N < p \leq \exp\left(\left(\log_2 N\right)^{\gamma}\right) \log N \log_{2}N \}.
$$
 Next, we define the multiplicative function $f(n)$ to be supported on the set of square-free numbers, with values for $p \in \mathcal{P}$ as
$$
f(p) = \bigg( \frac{(\log N)^{1 - \sigma} (\log_2 N)^{\sigma}}{(\log_3 N)^{1 - \sigma}} \bigg) \frac{1}{p^{\sigma} (\log p - \log_2 N - \log_3 N )}.
$$
Furthermore, if $p \notin \mathcal{P}, \,f(p)=0.$ 
\par
For  $k \in \left\{1, \cdots, \lfloor(\log_2 N)^{\gamma}\rfloor\right\},$ we define the set $\mathcal{P}_{k}$  of prime numbers as follows
$$\mathcal{P}_{k} \coloneqq \{p : e^k \log N \log_{2}N < p \leq e^{k+1} \log N\log_{2}N \}.$$
For real number $b \in (e-1, 1/\gamma)$ fixed, define
$$\mathcal{M}_{k} := \left\{ n \in \mathrm{supp}(f) : n \text{ has at least } \Delta_k := \frac{b (\log N)^{2 - 2\sigma}}{k^2 (\log_3 N)^{2 - 2\sigma}} \text{ prime divisors in } \mathcal{P}_{k} \right\}.$$ 
Let $\mathcal{M}^{\prime}_{k}$ be the set of integers from $\mathcal{M}_{k}$ that have prime divisors only in $\mathcal{P}_{k}$, then set
$$
\mathcal{M} := \mathrm{supp}(f) \setminus \bigcup_{k = 1}^{\lfloor(\log_2 N)^{\gamma}\rfloor} \mathcal{M}_{k}.
$$
\par
Clearly, $\mathcal{M}$ is divisor closed. Specifically, if $m^{\prime} \mid m \in \mathcal{M},$ then $m^{\prime} \in \mathcal{M}.$ According to the similar process as \cite[Lemma 8]{chirre2019extreme}, we have $|\mcm| \le N$. Set $N = \lfloor T^\kappa \rfloor$, $\kappa \in (0,1)$ and let $\mcl$ be the set of integers $l$ such that 
$$
\left[\left(1 + \frac{\log T}{T}\right)^l, \left(1 + \frac{\log T}{T}\right)^{l + 1}\right] \bigcap \mathcal{M} \neq \emptyset.
$$ 
Then, we let $m_l$ be the minimum of $\left[(1 + \log T/T)^l, (1 + \log T/T)^{l + 1}\right] \cap \mathcal{M} $ for all $l \in \mathcal{L}$, and
$$
r(m_l) := \Bigg( \sum_{n \in \mathcal{M},  (1 - \log T/T)^{l - 1} \leqslant n \leqslant (1 + \log T/T)^{l + 2}} f(n)^2 \Bigg)^{\frac{1}{2}}.
$$
\par
Finally, we define the following resonator 
$$
R(t) = \sum_{m \in \mcm^{\prime}}r(m)m^{-it}.
$$
Note that we have trivially 
\begin{equation}
    \label{Rtbound}
    |R(t)|^2 \leq R(0)^2 \leq |\mcm|\sum_{m \in \mcm^{\prime}}r(m)^2
\end{equation}
by the Cauchy-Schwarz inequality.

\subsection{Auxiliary propositions}\label{Secauxiliary}
Based on the resonator $R(t)$ constructed in Section \ref{secConstucting}, we present some auxiliary results in this section. Among these results, some will be used repeatedly in the proof of Theorem \ref{thm1}, while others serve as key ingredients.
\par
We set $\Phi(y):=e^{-y^{2}/2}$ as in \cite{bondarenko2017large}. Plainly, the Fourier transform $\widehat{\Phi}$ satisfies $\widehat{\Phi}(\xi)=\sqrt{2\pi}\Phi(\xi).$ Due to the exponential decay of $\Phi$, all sums and integrals associated with $\Phi$ that appear below are absolutely convergent. Hence, we may freely interchange the order of these sums and integrals.
\par
We will use the following bounds frequently.
\begin{pro}\label{proposition3.1}
    Let $R(t)$ and $\mcm^{\prime}$ be as in Section \ref{secConstucting}.
    If $T$ is sufficiently large, we have
    \begin{equation}\label{intRPhi}
        \int_{\mbr}|R(t)|^2\Phi \Big( \frac{t \log T}{T} \Big) \mathrm{d}t  \asymp \frac{T}{\log T}\sum_{m \in \mcm^{\prime}}r(m)^2,
    \end{equation}
    and
    \begin{equation}\label{intRprimePhi}
        \int_{\mbr}|R^{\prime}(t)|^2\Phi \Big( \frac{t \log T}{T} \Big) \mathrm{d}t \ll T(\log T)^3\sum_{m \in \mcm^{\prime}}r(m)^2.
    \end{equation}
\end{pro}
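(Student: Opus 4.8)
The plan is to run the standard resonator computation of Bondarenko--Seip for the specific Gaussian $\Phi$ and the resonator built in Section~\ref{secConstucting}. First I would multiply out the square and interchange summation and integration (legitimate by the exponential decay of $\Phi$, as already noted), so that for each pair $m,n\in\mcm^{\prime}$ the relevant oscillatory integral is, after the substitution $u=t\log T/T$,
$$\int_{\mbr}\Big(\frac{m}{n}\Big)^{-it}\Phi\Big(\frac{t\log T}{T}\Big)\mmd t=\frac{T}{\log T}\,\widehat{\Phi}\Big(c\,\frac{T}{\log T}\log\frac{m}{n}\Big)$$
for an absolute constant $c>0$, where by the stated identity $\widehat{\Phi}(\xi)=\sqrt{2\pi}\,\Phi(\xi)=\sqrt{2\pi}\,e^{-\xi^{2}/2}$ is a positive, super-exponentially decaying Gaussian. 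Hence
$$\int_{\mbr}|R(t)|^{2}\Phi\Big(\frac{t\log T}{T}\Big)\mmd t=\frac{T}{\log T}\sum_{m,n\in\mcm^{\prime}}r(m)r(n)\,\widehat{\Phi}\Big(c\,\frac{T}{\log T}\log\frac{m}{n}\Big),$$
with the analogous identity for $R^{\prime}$ carrying an extra factor $(\log m)(\log n)$ in each summand. Since $r(m),r(n)\ge 0$ and $\widehat{\Phi}>0$, every summand is nonnegative, so retaining only the diagonal $m=n$ gives $\int_{\mbr}|R(t)|^{2}\Phi(t\log T/T)\,\mmd t\ge\frac{T}{\log T}\widehat{\Phi}(0)\sum_{m}r(m)^{2}\asymp\frac{T}{\log T}\sum_{m}r(m)^{2}$, i.e.\ the lower bound in \eqref{intRPhi}.

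For the matching upper bound in \eqref{intRPhi} I would exploit the spacing of the minimizers $m_{l}$. If $m=m_{l}$ and $n=m_{l^{\prime}}$ with $l>l^{\prime}$, the defining intervals give $m/n\ge(1+\log T/T)^{l-l^{\prime}-1}$, hence $\log(m/n)\ge(|l-l^{\prime}|-1)\log(1+\log T/T)\gg(|l-l^{\prime}|-1)\log T/T$, so $c\frac{T}{\log T}|\log(m/n)|\gg|l-l^{\prime}|-1$. Combined with the Gaussian decay of $\widehat{\Phi}$ this yields $\widehat{\Phi}(c\frac{T}{\log T}\log(m/n))\ll e^{-c^{\prime}(|l-l^{\prime}|-1)^{2}}$ whenever $|l-l^{\prime}|\ge 2$, while the three pairs with $|l-l^{\prime}|\le 1$ are estimated trivially by $\widehat{\Phi}(0)$. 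Summing over $l^{\prime}$ gives $\sum_{n\in\mcm^{\prime}}\bigl|\widehat{\Phi}(c\frac{T}{\log T}\log(m/n))\bigr|\ll 1$ uniformly in $m$. Using $r(m)r(n)\le\frac12(r(m)^{2}+r(n)^{2})$, symmetry, and the evenness of $\widehat{\Phi}$, one obtains $\int_{\mbr}|R(t)|^{2}\Phi(t\log T/T)\,\mmd t\ll\frac{T}{\log T}\sum_{m}r(m)^{2}\sum_{n\in\mcm^{\prime}}|\widehat{\Phi}(\cdots)|\ll\frac{T}{\log T}\sum_{m}r(m)^{2}$, completing \eqref{intRPhi}.

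For \eqref{intRprimePhi} the same manipulation applies, now bounding $r(m)r(n)(\log m)(\log n)\le\frac12\bigl(r(m)^{2}(\log m)^{2}+r(n)^{2}(\log n)^{2}\bigr)$ before invoking the spacing estimate, which leaves $\int_{\mbr}|R^{\prime}(t)|^{2}\Phi(t\log T/T)\,\mmd t\ll\frac{T}{\log T}\bigl(\max_{m\in\mcm^{\prime}}\log m\bigr)^{2}\sum_{m}r(m)^{2}$. Every $m\in\mcm^{\prime}\subseteq\operatorname{supp}(f)$ is squarefree with all prime factors at most $\exp((\log_{2}N)^{\gamma})\log N\log_{2}N$, so Chebyshev's estimate $\sum_{p\le x}\log p\ll x$ gives $\log m\ll\exp((\log_{2}N)^{\gamma})\log N\log_{2}N$, which is $\ll(\log T)^{1+o(1)}$ since $N\le T^{\kappa}$ and $\gamma<1$. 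Thus $\bigl(\max_{m}\log m\bigr)^{2}\ll(\log T)^{2+o(1)}$, comfortably $\le(\log T)^{4}$ for $T$ large, and \eqref{intRprimePhi} follows.

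The part requiring real care is the off-diagonal bookkeeping in the second and third paragraphs: correctly propagating the dilation $T/\log T$ through $\widehat{\Phi}$, verifying the spacing inequality $m_{l}/m_{l^{\prime}}\ge(1+\log T/T)^{l-l^{\prime}-1}$, and isolating the finitely many near-diagonal pairs $|l-l^{\prime}|\le 1$, which are \emph{not} separated and must be absorbed into the main term $\frac{T}{\log T}\sum_{m}r(m)^{2}$. The remaining ingredients---the Fourier identity, the positivity argument for the lower bound, and the crude size estimate for $\log m$---are routine.
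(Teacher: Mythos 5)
Your proposal is correct and is, in substance, the same argument the paper relies on: the paper simply cites \cite[Proposition 2.7]{minelli2025discrete} with $T$ replaced by $T/\log T$, and that proposition is proved by exactly the computation you carry out (Fourier expansion of $|R|^2$, positivity of the Gaussian for the lower bound, and the $(1+\log T/T)^l$-spacing of the representatives $m_l$ for the off-diagonal upper bounds). Your explicit verification that $\max_{m\in\mcm^{\prime}}\log m\ll(\log T)^{1+o(1)}$ correctly supplies the extra $(\log T)^2$-type factor needed for \eqref{intRprimePhi}, so nothing is missing.
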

\begin{proof}
     These results can be immediately obtained by replacing $T$ by $T/\log T$ in \cite[Proposition 2.7]{minelli2025discrete}.
\end{proof}
Next, define 
\begin{align*}
    \mathcal{A}_{N} := \frac{1}{\sum_{n\in\mathbb{N}} f(n)^2} \sum_{n\in\mathbb{N}} \frac{f(n)}{n^{\sigma}} \sum_{d \mid n} f(d)d^{\sigma}, \nonumber 
\end{align*}
and since $f$ is the multiplicative function defined in Section \ref{secConstucting}, we can write $\mca_N$ as 
\begin{equation*}
    \mca_N = \prod_{p\in \mathcal{P}} \frac{1 + f(p)^2 + f(p)p^{-\sigma}}{1 + f(p)^2}.
\end{equation*}
Similar to \cite[pp. 5-7]{dong2025largevalue}, we have the following propositions.

\begin{pro}
    \label{prop3.1} Let $\gamma$ be as in Section \ref{secConstucting}. Then for some $\delta \in (0,1)$, we have
    $$
    \mathcal{A}_{N} \geq \exp\bigg((\delta\gamma+ o(1))\frac{\left(\log N  \right)^{1-\sigma} \left(\log_3 N\right)^{\sigma}}{\left(\log_2 N\right)^{\sigma}}\bigg), \quad N \to \infty.
    $$
\end{pro}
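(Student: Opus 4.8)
The plan is to evaluate $\mathcal{A}_N = \prod_{p \in \mathcal{P}} \bigl(1 + f(p)p^{-\sigma}/(1+f(p)^2)\bigr)$ by passing to logarithms and comparing the resulting prime sum with an integral. First I would observe that since $f(p) = O(1)$ uniformly on $\mathcal{P}$ (indeed $f(p) \to 0$, because $p^\sigma$ is much larger than the normalizing factor $(\log N)^{1-\sigma}(\log_2 N)^\sigma/(\log_3 N)^{1-\sigma}$ for $p > e\log N \log_2 N$), we have $\log\bigl(1 + f(p)p^{-\sigma}/(1+f(p)^2)\bigr) = f(p)p^{-\sigma}(1 + O(f(p)^2 + f(p)p^{-\sigma}))$. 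Hence
$$
\log \mathcal{A}_N = (1 + o(1)) \sum_{p \in \mathcal{P}} \frac{f(p)}{p^{\sigma}} = (1+o(1)) \cdot \frac{(\log N)^{1-\sigma}(\log_2 N)^\sigma}{(\log_3 N)^{1-\sigma}} \sum_{p \in \mathcal{P}} \frac{1}{p^{2\sigma}(\log p - \log_2 N - \log_3 N)},
$$
and the remaining task is to show the prime sum $S := \sum_{p\in\mathcal P} p^{-2\sigma}(\log p - \log_2 N - \log_3 N)^{-1}$ has size at least $(\delta\gamma + o(1))(\log N \log_2 N)^{-(2\sigma-1)}(\log_3 N)^{1-2\sigma}$ for a suitable $\delta \in (0,1)$; multiplying the two factors then produces $\exp((\delta\gamma+o(1))(\log N)^{1-\sigma}(\log_3 N)^\sigma/(\log_2 N)^\sigma)$ after noting $(2\sigma-1)\log(\log N\log_2 N) = (2A/\log_2 N + O(\ldots))\log(\log N \log_2 N) = o(1)\cdot$ nothing problematic — one keeps track of how $\sigma = 1/2 + A/\log_2 N$ enters.

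Next I would estimate $S$ by partial summation against the prime counting function, or equivalently substitute $p = u \log N \log_2 N$ so that for $p \in \mathcal{P}$ one has $u \in (e, \exp((\log_2 N)^\gamma))$ and $\log p = \log u + \log_2 N + \log_3 N$, whence the denominator $\log p - \log_2 N - \log_3 N$ becomes simply $\log u$. By the prime number theorem the measure $\sum_{p \le x} 1 \approx x/\log x$ gives, with $x = u\log N\log_2 N$ and $\log x \sim \log_2 N$, a contribution $d(\text{count}) \approx (\log N/\log u)\, du$ roughly (being careful with the $1/\log x \sim 1/\log_2 N$ factor). Also $p^{-2\sigma} = p^{-1} p^{-(2\sigma-1)} = p^{-1}(u\log N\log_2 N)^{-2A/\log_2 N}$, and here $(u)^{-2A/\log_2 N} = 1 + o(1)$ uniformly for $u$ in the relevant range (since $\log u \le (\log_2 N)^\gamma$ with $\gamma < 1$, so $(\log u)/\log_2 N \to 0$), while $(\log N \log_2 N)^{-2A/\log_2 N} = \exp(-2A\log(\log N\log_2 N)/\log_2 N) = 1 + o(1)$ as well. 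This is exactly where the condition $0 < \sigma - 1/2 \ll (\log_2 N)^{-1}$ is used: it keeps $p^{-(2\sigma-1)}$ from decaying. Putting this together, $S \sim (\log N\log_2 N)^{-1} \int_e^{\exp((\log_2 N)^\gamma)} \frac{du}{u \log u} \cdot (\text{normalization})$, and $\int_e^{\exp((\log_2 N)^\gamma)} \frac{du}{u\log u} = \log_2\bigl(\exp((\log_2 N)^\gamma)\bigr) - \log_2 e = \gamma \log_3 N + O(1)$. Tracking the powers of $\log N$, $\log_2 N$, $\log_3 N$ then yields $S \gg \gamma (\log_3 N)(\log N \log_2 N)^{-(2\sigma-1)}(\log_3 N)^{-(2\sigma-1)}$ up to the $(1+o(1))$ factors absorbed into $\delta < 1$.

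The main obstacle — and the place requiring genuine care rather than routine bookkeeping — is the uniform control of the error terms in the prime-counting asymptotics over the long range $p \le \exp((\log_2 N)^\gamma)\log N\log_2 N$, together with the claim that the factors $u^{-(2\sigma-1)}$ and the various $(1 + O(f(p)^2 + f(p)p^{-\sigma}))$ corrections are genuinely $1 + o(1)$ \emph{uniformly}, so that the constant $\delta$ can be taken independent of $N$ (and strictly less than $1$ to absorb all losses). One should verify that $\sum_{p\in\mathcal P} f(p)^2/p^\sigma$ and $\sum_{p\in\mathcal P} f(p)^2 p^{-2\sigma}$ are of strictly smaller order than $S$ itself, which follows because $f(p)^2 p^{-\sigma} \ll f(p)p^{-\sigma}\cdot\sup_{\mathcal P} f(p)$ and $\sup_{\mathcal P}f(p) \to 0$. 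Since the statement is asserted to follow "similar to \cite[pp. 5-7]{dong2025largevalue}", I would essentially mirror that computation, making only the substitution appropriate to the shifted line $\sigma_A$ and checking that the shift contributes only a $1+o(1)$ multiplicative factor throughout; the final inequality with some fixed $\delta \in (0,1)$ then holds for all sufficiently large $N$.
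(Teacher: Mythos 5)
Your overall route coincides with the paper's: linearize $\log\mathcal{A}_N$ to $(1+o(1))\sum_{p\in\mathcal{P}}f(p)p^{-\sigma}$ (valid because $f(p)<(\log_3N)^{\sigma-1}\to0$ uniformly on $\mathcal{P}$), then evaluate the prime sum via the substitution $p=u\log N\log_2N$ and the prime number theorem. The paper does exactly the first step and then cites \cite[Proposition 3.1]{dong2025largevalue} for the second, so you are filling in the outsourced computation — but that computation contains a concrete error. The factor $(\log N\log_2N)^{1-2\sigma}=(\log N\log_2N)^{-2A/\log_2N}$ is \emph{not} $1+o(1)$: since $\log(\log N\log_2N)=\log_2N+\log_3N$, it equals $\exp(-2A(1+\log_3N/\log_2N))=e^{-2A}+o(1)$, a constant strictly below $1$. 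The same holds for $p^{-(2\sigma-1)}=e^{-2A(1+o(1))}$ for every $p\in\mathcal{P}$, because $\log p\sim\log_2N$ throughout that range; only $u^{-(2\sigma-1)}$ with $\log u\le(\log_2N)^{\gamma}$, $\gamma<1$, is genuinely $1+o(1)$. The correct asymptotic is
$$
\sum_{p\in\mathcal{P}}\frac{f(p)}{p^{\sigma}}=\big(\gamma e^{-2A}+o(1)\big)\,\frac{(\log N)^{1-\sigma}(\log_3N)^{\sigma}}{(\log_2N)^{\sigma}},
$$
so the proposition holds only for $\delta<e^{-2A}$. Since $e^{-2A}\in(0,1)$, the statement as written ("for some $\delta\in(0,1)$") survives, but your assertion that all corrections are $1+o(1)$, so that $\delta$ absorbs only harmless losses, is false — and it is not a cosmetic point, because the paper later wants to take $\delta$ arbitrarily close to $1$.

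Separately, your two displayed sizes for $S=\sum_{p\in\mathcal{P}}p^{-2\sigma}(\log p-\log_2N-\log_3N)^{-1}$ are mutually inconsistent and neither is correct: the first is of constant order, the second of order $\log_3N$, whereas the true size is $S=(1+o(1))\gamma e^{-2A}\log_3N/\log_2N$. The missing $1/\log_2N$ comes from the prime density: $d\pi(t)\approx dt/\log t$ with $\log t\sim\log_2N$ gives $d\pi\approx\log N\,du$ after the substitution, not $(\log N/\log u)\,du$ — the $1/\log u$ belongs to the weight $(\log p-\log_2N-\log_3N)^{-1}$, not to the density. Multiplying your stated target for $S$ by the normalizing factor $(\log N)^{1-\sigma}(\log_2N)^{\sigma}(\log_3N)^{\sigma-1}$ would produce an exponent of order $e^{-3A}\sqrt{\log N\log_2N/\log_3N}$, far larger than what the proposition asserts, so the power count must be redone before the argument can be considered complete.
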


\begin{proof}
      Since $f(p) < \left( \log_3 N\right)^{\sigma-1}$ for $p \in \mathcal{P},$ we obtain
    $$
    \mathcal{A}_{N} =\exp \bigg((1+o(1)) \sum_{p \in \mathcal{P}} \frac{f(p)}{p^{\sigma}}\Big(\frac{p}{p+1}\Big)\bigg)=\exp \bigg((1+o(1)) \sum_{p \in \mathcal{P}} \frac{f(p)}{p^{\sigma}}\bigg).
    $$
    The remainder of the proof follows from \cite[Proposition 3.1]{dong2025largevalue}.
\end{proof}

\begin{pro}
        \label{prop3.2}
    We have
    $$\frac{1}{\sum_{n \in \mathbb{N}} f(n)^2} \sum_{\substack{n \in \mathbb{N} \\ n \notin \mathcal{M}}} \frac{f(n)}{n^{\sigma}} \sum_{d \mid n} f(d)d^{\sigma}=o\left(\mathcal{A}_{N}\right), \quad  N \to \infty.
    $$

\end{pro}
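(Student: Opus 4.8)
The plan is to run a uniform Rankin-type (large-deviation) estimate at each scale $k$ and sum over $k$. Since $f$ is supported on squarefree integers whose prime factors all lie in $\mathcal{P}$, and $\mathcal{M}=\mathrm{supp}(f)\setminus\bigcup_{k=1}^{\lfloor(\log_2 N)^\gamma\rfloor}\mathcal{M}_k$, every $n\in\mathbb{N}$ with $f(n)\neq0$ and $n\notin\mathcal{M}$ belongs to some $\mathcal{M}_k$. Set $F(n):=\frac{f(n)}{n^\sigma}\sum_{d\mid n}f(d)d^\sigma$; this is $\ge0$ because $f(p)>0$ for $p\in\mathcal{P}$. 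As $F\ge0$, it therefore suffices to prove, uniformly for $1\le k\le(\log_2 N)^\gamma$,
$$
S_k:=\frac{1}{\sum_{n\in\mathbb{N}}f(n)^2}\sum_{n\in\mathcal{M}_k}F(n)\ \le\ \mathcal{A}_N\,e^{-c\Delta_k}
$$
for some fixed $c>0$; summing over the $\le(\log_2 N)^\gamma$ values of $k$ and using that $\Delta_k$ is decreasing in $k$ with $\Delta_{\lfloor(\log_2 N)^\gamma\rfloor}\gg\log N\,(\log_2 N)^{-2\gamma}(\log_3 N)^{-1}\to\infty$ then gives $\sum_k S_k\le\mathcal{A}_N(\log_2 N)^\gamma e^{-c\Delta_{\lfloor(\log_2 N)^\gamma\rfloor}}=o(\mathcal{A}_N)$, which is the claim.

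To bound $S_k$, I would fix a constant $\theta>1$ (taking $\theta=e$ will work) and use that every $n\in\mathcal{M}_k$ has at least $\Delta_k$ prime divisors in $\mathcal{P}_k$, so $\mathbf{1}_{n\in\mathcal{M}_k}\le\theta^{\omega_k(n)-\Delta_k}$ with $\omega_k(n):=\#\{p\mid n:p\in\mathcal{P}_k\}$. Since $F\ge0$ this yields $\sum_{n\in\mathcal{M}_k}F(n)\le\theta^{-\Delta_k}\sum_{n}\theta^{\omega_k(n)}F(n)$, and multiplicativity of $f$ together with its squarefree support turns the last sum into an Euler product:
$$
\sum_{n\in\mathbb{N}}\theta^{\omega_k(n)}F(n)=\prod_{p\in\mathcal{P}\setminus\mathcal{P}_k}\bigl(1+y_p\bigr)\prod_{p\in\mathcal{P}_k}\bigl(1+\theta y_p\bigr),\qquad y_p:=f(p)^2+f(p)p^{-\sigma}.
$$
Dividing by $\sum_n f(n)^2=\prod_{p\in\mathcal{P}}(1+f(p)^2)$, recalling $\mathcal{A}_N=\prod_{p\in\mathcal{P}}\frac{1+y_p}{1+f(p)^2}$, and using $\frac{1+\theta y}{1+y}\le e^{(\theta-1)y}$ for $y\ge0$, I obtain
$$
S_k\ \le\ \theta^{-\Delta_k}\,\mathcal{A}_N\prod_{p\in\mathcal{P}_k}\frac{1+\theta y_p}{1+y_p}\ \le\ \theta^{-\Delta_k}\,\mathcal{A}_N\,\exp\bigl((\theta-1)V_k\bigr),\qquad V_k:=\sum_{p\in\mathcal{P}_k}y_p.
$$

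The crux is the uniform bound $V_k\le(1+o(1))\frac{e^{-2A}}{b}\Delta_k$. On $\mathcal{P}_k$ one has $f(p)p^\sigma=\frac{(\log N)^{1-\sigma}(\log_2 N)^\sigma}{(\log_3 N)^{1-\sigma}(\log p-\log_2 N-\log_3 N)}\asymp\frac{(\log N)^{1-\sigma}(\log_2 N)^\sigma}{k(\log_3 N)^{1-\sigma}}$, which tends to $\infty$ uniformly in $k\le(\log_2 N)^\gamma$ (recall $(\log N)^{1-\sigma}=e^{-A}\sqrt{\log N}$); hence $\sum_{p\in\mathcal{P}_k}f(p)p^{-\sigma}=o\bigl(\sum_{p\in\mathcal{P}_k}f(p)^2\bigr)$ and $V_k=(1+o(1))\sum_{p\in\mathcal{P}_k}f(p)^2$. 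Using $\log p-\log_2 N-\log_3 N\in(k,k+1]$ on $\mathcal{P}_k$ and a partial-summation computation with the prime number theorem — bearing in mind $2\sigma-1=2A/\log_2 N$ and $\log p=\log_2 N+\log_3 N+O(k)$ there — one finds $\sum_{p\in\mathcal{P}_k}p^{-2\sigma}=(1+o(1))e^{-2A}/\log_2 N$ uniformly in $k$, so that $\sum_{p\in\mathcal{P}_k}f(p)^2\le(1+o(1))\frac{e^{-2A}(\log_2 N)^{2\sigma-1}}{b}\Delta_k=(1+o(1))\frac{e^{-2A}}{b}\Delta_k$ since $(\log_2 N)^{2\sigma-1}=(\log_2 N)^{2A/\log_2 N}\to1$. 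With $\theta=e$ the exponent in the bound for $S_k$ is $-\Delta_k+(e-1)V_k\le-\bigl(1-(e-1)\tfrac{e^{-2A}}{b}+o(1)\bigr)\Delta_k$, and because $b>e-1>(e-1)e^{-2A}$ this is $\le-c\Delta_k$ for a fixed $c>0$ once $N$ is large, giving $S_k\le\mathcal{A}_N e^{-c\Delta_k}$ uniformly and finishing the proof.

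The step I expect to be the main obstacle is exactly this estimate of $\sum_{p\in\mathcal{P}_k}p^{-2\sigma}$: it must be both \emph{uniform} over $1\le k\le(\log_2 N)^\gamma$ and sharp enough that the constant multiplying $\Delta_k$ in $V_k$ stays strictly below $\tfrac{1}{e-1}$ — which is precisely where the hypothesis $b>e-1$ is consumed and where there is just enough room to take $\theta=e$. The remaining manipulations are routine Euler-product bookkeeping, parallel to \cite[Lemma~8]{chirre2019extreme}, \cite{bondarenko2017large}, and \cite[pp.~5--7]{dong2025largevalue}.
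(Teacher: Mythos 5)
Your proof is correct and is essentially the paper's own argument: the paper simply defers to \cite[Proposition 3.2]{dong2025largevalue} and \cite[Lemma 2]{bondarenko2017large}, whose proofs are exactly this decomposition of $\mathrm{supp}(f)\setminus\mathcal{M}$ into the sets $\mathcal{M}_k$ followed by Rankin's trick $\mathbf{1}_{n\in\mathcal{M}_k}\le e^{\omega_k(n)-\Delta_k}$ and Euler-product bookkeeping, with the hypothesis $b>e-1$ consumed in bounding $V_k$ against $\Delta_k$. You have merely written out explicitly the uniform estimates (in particular $\sum_{p\in\mathcal{P}_k}p^{-2\sigma}=(1+o(1))e^{-2A}/\log_2N$) that the cited sources carry out in the case $\sigma=1/2$.
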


\begin{proof}
    We can immediately obtain this conclusion through an almost the same argument as in \cite[Proposition 3.2]{dong2025largevalue}, which is similar to \cite[Lemma 2]{bondarenko2017large}.
\end{proof}

\begin{pro}
    \label{prop3.3}
    We have
    $$\frac{1}{\sum_{n \in \mathbb{N}} f(n)^2} \sum_{n \in \mathbb{N}} \frac{f(n)}{n^{\sigma}} \sum_{\substack{d \mid n \\ d \le n/N^{\varepsilon} }} f(d)d^{\sigma}=o\left(\mathcal{A}_{N}\right), \quad  N \to \infty.
    $$
\end{pro}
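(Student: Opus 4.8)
The plan is to run the standard ``only the divisors $d$ close to $n$ contribute'' argument (as in \cite[Lemma 2]{bondarenko2017large} and the proof of Proposition \ref{prop3.2}), reducing the claim to a tail estimate handled by Rankin's trick. First I would use that $f$ is multiplicative and supported on square-free integers all of whose prime factors lie in $\mcp$: for such $n$, each divisor $d\mid n$ satisfies $(d,n/d)=1$ and $f(n)=f(d)f(n/d)$, and writing $m=n/d$ the condition $d\le n/N^{\varepsilon}$ is precisely $m\ge N^{\varepsilon}$, while $\tfrac{f(n)}{n^{\sigma}}f(d)d^{\sigma}=f(d)^{2}\tfrac{f(m)}{m^{\sigma}}$. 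Since every $p\in\mcp$ has $\log p>\log(e\log N\log_2 N)=1+\log_2 N+\log_3 N$, we have $f(p)>0$, so all terms are non-negative; dropping the coprimality condition and extending the ranges to $\mbn$,
\begin{equation*}
\sum_{n\in\mbn}\frac{f(n)}{n^{\sigma}}\sum_{\substack{d\mid n\\ d\le n/N^{\varepsilon}}}f(d)d^{\sigma}\ \le\ \Big(\sum_{d\in\mbn}f(d)^{2}\Big)\sum_{m\ge N^{\varepsilon}}\frac{f(m)}{m^{\sigma}}.
\end{equation*}
As $\sum_{d\in\mbn}f(d)^{2}=\sum_{n\in\mbn}f(n)^{2}$, the left-hand side of Proposition \ref{prop3.3} is at most $\sum_{m\ge N^{\varepsilon}}f(m)/m^{\sigma}$, and since $\mathcal{A}_{N}\to\infty$ by Proposition \ref{prop3.1} it suffices to prove this tail tends to $0$.

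To bound the tail I would fix a constant $\eta\in(0,1/2)$ and use $m^{-\sigma}\le N^{-\varepsilon\eta}m^{-\sigma+\eta}$ for $m\ge N^{\varepsilon}$, which gives by multiplicativity and $1+x\le e^{x}$
\begin{equation*}
\sum_{m\ge N^{\varepsilon}}\frac{f(m)}{m^{\sigma}}\ \le\ N^{-\varepsilon\eta}\prod_{p\in\mcp}\Big(1+\frac{f(p)}{p^{\sigma-\eta}}\Big)\ \le\ N^{-\varepsilon\eta}\exp\Big(\sum_{p\in\mcp}\frac{f(p)}{p^{\sigma}}\,p^{\eta}\Big).
\end{equation*}
On $\mcp$ we have $p\le\exp((\log_2 N)^{\gamma})\log N\log_2 N$ with $\gamma<1$, hence $p^{\eta}\le\exp(\eta(\log_2 N)^{\gamma})(\log N\log_2 N)^{\eta}=(\log N)^{\eta+o(1)}$; together with the estimate $\sum_{p\in\mcp}f(p)/p^{\sigma}=O\!\big((\log N)^{1-\sigma}(\log_3 N)^{\sigma}(\log_2 N)^{-\sigma}\big)=O(\sqrt{\log N})$ — the same prime sum already controlled in the proof of Proposition \ref{prop3.1} (following \cite[Proposition 3.1]{dong2025largevalue}) — this yields
\begin{equation*}
\sum_{m\ge N^{\varepsilon}}\frac{f(m)}{m^{\sigma}}\ \le\ \exp\!\Big(-\varepsilon\eta\log N+O\big((\log N)^{1/2+\eta+o(1)}\big)\Big)\ \longrightarrow\ 0,
\end{equation*}
since $1/2+\eta<1$; this would complete the proof.

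The multiplicative bookkeeping and the bound $\sum_{p\in\mcp}f(p)/p^{\sigma}=O(\sqrt{\log N})$ are routine (the latter is essentially the computation underlying Proposition \ref{prop3.1}). The one step that needs a little attention is the choice of the Rankin exponent $\eta$: because the range $\mcp$ is fairly long, inflating $p^{-\sigma}$ to $p^{-\sigma+\eta}$ multiplies the exponent of the Euler product by $p^{\eta}$, and one must keep the resulting exponent of size $o(\log N)$ so that it is absorbed by the saving $N^{-\varepsilon\eta}$; taking any $\eta<1/2$ does this, and it is here that the hypothesis $\gamma<1$ enters, guaranteeing $\exp(\eta(\log_2 N)^{\gamma})=(\log N)^{o(1)}$.
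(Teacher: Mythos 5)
Your proposal is correct and follows essentially the same route as the paper, which simply invokes \cite[Lemma 3]{bondarenko2017large}: you have written out that argument in full (factor each divisor pair as $n=dm$ with $m\ge N^{\varepsilon}$, bound the tail of $\sum f(m)/m^{\sigma}$ by Rankin's trick with exponent $\eta<1/2$, and use $\sum_{p\in\mcp}f(p)/p^{\sigma}=O(\sqrt{\log N})$, which is where $\sigma\ge 1/2$ enters). The bookkeeping, including the positivity of $f(p)$ on $\mcp$ and the role of $\gamma<1$ in keeping $p^{\eta}=(\log N)^{\eta+o(1)}$, is accurate.
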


\begin{proof}
    Since $\sigma = \sigma_A \ge 1/2$, the proof can then be completed by the same method as \cite[Lemma 3]{bondarenko2017large}.
\end{proof}

Combining Propositions \ref{prop3.1}, \ref{prop3.2}, and \ref{prop3.3}, and employing \cite[Lemma 6]{bondarenko2018argument} and \cite[Eq. (19)]{bondarenko2018argument}, we have the following important consequence. 

\begin{pro}
    \label{prop3.4}
    Let $R(t)$, $\mcm$ and $\mcm^{\prime}$ be as in Section \ref{secConstucting}. Assume that $|\mcm| = \lfloor T^{\kappa} \rfloor$, $0< \kappa <1$ and $a_n \ge 0$ for every $n$. If $T$ is sufficiently large, we have
    \begin{align*}
        \int_{\mbr}\sum_{n \le \alpha T}\frac{a_n}{n^{\sigma + it}}&|R(t)|^2\Phi \Big(\frac{t \log T}{T} \Big)  \mathrm{d}t \\
        &  \ge \frac{T}{\log T} \exp \bigg(\Big(\delta \gamma \kappa^{1-\sigma} + o(1)\Big)\frac{\left(\log T  \right)^{1-\sigma} \left(\log_3 T\right)^{\sigma}}{\left(\log_2 T\right)^{\sigma}} \bigg) \sum_{m \in \mcm^{\prime}}r(m)^2
    \end{align*}
    for $0 < \delta < 1$ and $0 < \gamma < (e-1)^{-1}$.
\end{pro}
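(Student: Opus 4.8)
The plan is to expand $|R(t)|^{2}$ as a double Dirichlet polynomial, integrate it term by term against the Gaussian weight, use the nonnegativity of the $a_{n}$ to retain only a convenient diagonal-type subsum, and then recognise that subsum as $\big(\sum_{m\in\mcm^{\prime}}r(m)^{2}\big)$ times the arithmetic quantity $\mca_{N}$ controlled by Propositions \ref{prop3.1}--\ref{prop3.3}. Writing $R(t)=\sum_{m\in\mcm^{\prime}}r(m)m^{-it}$ and expanding the square, the integral on the left of the asserted inequality equals
\[
\sum_{n\le\alpha T}\frac{a_{n}}{n^{\sigma}}\sum_{m,m^{\prime}\in\mcm^{\prime}}r(m)r(m^{\prime})\int_{\mbr}\Big(\frac{m^{\prime}}{mn}\Big)^{it}\Phi\Big(\frac{t\log T}{T}\Big)\mmd t .
\]
A linear change of variables turns the inner integral into $\tfrac{T}{\log T}\,\widehat{\Phi}\big(-\tfrac{T}{2\pi\log T}\log\tfrac{m^{\prime}}{mn}\big)$; since $\widehat{\Phi}$ is a positive Gaussian and $a_{n},r(m)\ge0$, every summand is nonnegative, so one is free to discard terms.

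I would first keep only the triples for which $m^{\prime}$ and $mn$ lie in a common slightly enlarged interval $\big[(1+\tfrac{\log T}{T})^{l-1},(1+\tfrac{\log T}{T})^{l+2}\big]$, where $\widehat{\Phi}\big(-\tfrac{T}{2\pi\log T}\log\tfrac{m^{\prime}}{mn}\big)\gg1$. Exploiting that $\mcm$ is divisor closed and that $r(m_{l})^{2}$ is by construction $\sum_{n\in\mcm}f(n)^{2}$ over $n$ in the enlarged $l$-th interval --- this is the bookkeeping of \cite[Lemma 6]{bondarenko2018argument} and \cite[Eq.~(19)]{bondarenko2018argument}, here transposed to the weight $\Phi(t\log T/T)$ --- this restricted sum is bounded below, up to an absolute constant, by
\[
\frac{T}{\log T}\Big(\sum_{n\in\mcm}f(n)^{2}\Big)\frac{1}{\sum_{n\in\mbn}f(n)^{2}}\sum_{\substack{n\in\mcm\\ n>1}}\frac{a_{n}f(n)}{n^{\sigma}}\sum_{\substack{d\mid n\\ d>n/N^{\varepsilon}}}f(d)d^{\sigma}.
\]
Each $n\in\mcm\setminus\{1\}$ exceeds $e\log N\log_{2}N$, so in the intended application $a_{n}=(\log n)^{j}\ge1$ on this range (the only place where more than $a_{n}\ge0$ enters); thus the double sum over $n,d$ is $\ge\sum_{n\in\mcm,\,n>1}\frac{f(n)}{n^{\sigma}}\sum_{d\mid n,\,d>n/N^{\varepsilon}}f(d)d^{\sigma}$. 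By Propositions \ref{prop3.2} and \ref{prop3.3}, dropping the constraints ``$n\in\mcm$'', ``$n>1$'' and ``$d>n/N^{\varepsilon}$'' costs only a factor $1+o(1)$, so this is $\ge(1-o(1))\mca_{N}\sum_{n\in\mbn}f(n)^{2}$; moreover $\sum_{n\in\mcm}f(n)^{2}\gg\sum_{m\in\mcm^{\prime}}r(m)^{2}$ since the enlarged intervals overlap boundedly, and $\sum_{n\in\mcm}f(n)^{2}=(1-o(1))\sum_{n\in\mbn}f(n)^{2}$ by the construction of $\mcm$ (as in \cite[Lemma 8]{chirre2019extreme}). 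Combining, the original integral is $\gg\frac{T}{\log T}\,\mca_{N}\sum_{m\in\mcm^{\prime}}r(m)^{2}$; inserting $\mca_{N}\ge\exp\!\big((\delta\gamma+o(1))(\log N)^{1-\sigma}(\log_{3}N)^{\sigma}(\log_{2}N)^{-\sigma}\big)$ from Proposition \ref{prop3.1} and substituting $N=\lfloor T^{\kappa}\rfloor$, so that $(\log N)^{1-\sigma}=\kappa^{1-\sigma}(\log T)^{1-\sigma}(1+o(1))$ while $(\log_{3}N)^{\sigma}/(\log_{2}N)^{\sigma}=(1+o(1))(\log_{3}T)^{\sigma}/(\log_{2}T)^{\sigma}$, gives the claimed bound (the absolute constant $\gg$ is absorbed by the $o(1)$ in the exponent, since the exponent tends to infinity).

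The main obstacle is the middle step: making rigorous the passage from the double sum over $\mcm^{\prime}\times\mcm^{\prime}$ with Gaussian localisation to the arithmetic bilinear form $\sum_{n}\frac{a_{n}f(n)}{n^{\sigma}}\sum_{d\mid n}f(d)d^{\sigma}$. Concretely one must match, for each $n^{\prime}\in\mcm$ and each divisor $d\mid n^{\prime}$, the resonator coefficient of the interval containing $d$ and that of the interval containing $n^{\prime}$ against the short-interval masses $\sum f(\cdot)^{2}$, track the bounded overlap of the enlarged intervals, and verify that the truncation $n\le\alpha T$ loses nothing (it does not, since $\mcm\subseteq[1,N]$ with $N=T^{\kappa}\ll\alpha T$). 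This is precisely what \cite[Lemma 6]{bondarenko2018argument} and \cite[Eq.~(19)]{bondarenko2018argument} supply, once adapted to the present weight and to $\sigma=\sigma_{A}\in(1/2,1)$; the remaining inputs --- the growth of $\mca_{N}$ and the negligibility of the contributions from $n\notin\mcm$ and from short divisors --- are exactly the arithmetic Propositions \ref{prop3.1}--\ref{prop3.3}.
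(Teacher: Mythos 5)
Your proposal follows essentially the same route as the paper, which itself proves Proposition \ref{prop3.4} only by citing \cite[Lemma 6, Eq.~(19)]{bondarenko2018argument} together with Propositions \ref{prop3.1}--\ref{prop3.3}; your reconstruction of the Fourier-positivity step, the near-diagonal restriction, and the final substitution $N=\lfloor T^{\kappa}\rfloor$ is the intended argument. One localized slip: in your key intermediate lower bound the Dirichlet coefficient should be $a_{n/d}$, not $a_{n}$ --- in the Bondarenko--Seip matching the variable of the Dirichlet polynomial is $k=n/d$ (so that $mk\approx m'$), and hence the place where one needs more than $a_{n}\ge 0$ is the range $1\le k\le N^{\varepsilon}$, not the range $n\in\mcm\setminus\{1\}$; your justification ``each $n\in\mcm\setminus\{1\}$ exceeds $e\log N\log_{2}N$'' therefore addresses the wrong variable (in the application $a_{1}=0$ and $a_{2}=(\log 2)^{j}<1$, which only costs a negligible part of $\mca_{N}$, but that is the argument one must make). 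You are right, though, that the hypothesis $a_{n}\ge 0$ alone cannot suffice (take $a_{n}\equiv 0$), which is a defect of the statement rather than of your proof. A second small slip: $\mcm\not\subseteq[1,N]$ (the paper only asserts $|\mcm|\le N$; elements of $\mcm$ can far exceed $N$), so the truncation $n\le\alpha T$ is harmless not for that reason but because the retained divisors satisfy $n/d<N^{\varepsilon}$. Neither slip changes the architecture of the proof, and the overcounting issue in the near-diagonal selection, which you correctly flag as the main technical point, is exactly what the cited lemma of Bondarenko--Seip supplies.
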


\subsection{Proof of Theorem \ref{thm1}}
Let $R(t)$ be as in Section \ref{secConstucting} and $\Phi$ be as in Section \ref{Secauxiliary}. To apply the resonance method, we begin with defining the following two sums
$$
S_1 \coloneqq S_1 (R,N) = \sum_{\ell \in \mbz} |R(\alpha \ell)|^2 \Phi \Big(\frac{\ell \log N}{N} \Big)
$$
and 
$$
S_2 \coloneqq S_2 (R,N) = \sum_{\ell \in \mbz} \sum_{n \le \alpha N} \frac{(\log n )^j}{n^{\sigma + i \alpha \ell}}|R(\alpha \ell)|^2 \Phi \Big(\frac{\ell \log N}{N} \Big),
$$
where $j \in \mbn$. 
Then, we estimate $S_1$ and $S_2$, respectively.
\par
First, we establish effective bounds for $S_1$. Furthermore, the rapid decay of $\Phi$ implies 
$$
S_1 \ll \sum_{| \ell| \le N}|R(\alpha \ell)|^2 \Phi \Big(\frac{\ell \log N}{N}\Big) + R(0)^2.
$$
In Lemma \ref{lemmaGallagher}, we set $g(t) = R(\alpha t)\sqrt{\Phi(t\log N / N)} $. Then, invoking the Minkowski inequality, we can bound the sum on the right-hand side above by
\begin{align*}
    \int_{\mathbb{R}} |R(at)|^2 &\Phi\Big( \frac{t\log N}{N} \Big)  \mathrm{d}t + \left( \int_{\mathbb{R}} |R(at)|^2 \Phi\Big( \frac{t\log N}{N} \Big) \mathrm{d}t \int_{\mathbb{R}} |R'(at)|^2 \Phi\Big( \frac{t\log N}{N} \Big) \mathrm{d}t \right)^{\frac{1}{2}} \\
    &+ \left( \int_{\mathbb{R}} |R(at)|^2\Phi\Big( \frac{t\log N}{N} \Big)\mathrm{d}t \int_{\mathbb{R}} |R(at)|^2 \Phi\Big( \frac{t\log N}{N} \Big) \frac{t^2 (\log N)^4 }{N^4} \mathrm{d}t \right)^{\frac{1}{2}}.
\end{align*}
By utilizing \eqref{intRPhi} and \eqref{intRprimePhi} in Proposition \ref{proposition3.1}, and combining \eqref{Rtbound}, we obtain the following upper bound for $S_1$
\begin{equation}
    \label{S1upperbound}
    S_1 \ll N\log N \sum_{m \in \mcm^{\prime}}r(m)^2.
\end{equation}
\par
In the opposite direction, by expanding out $R|(\alpha \ell)|^2$ in $S_1$, we obtain
$$
S_1 = \frac{N}{\log N} \sum_{m,n \in \mathcal{M}} r(m)r(n) \sum_{\ell \in \mathbb{Z}} \widehat{\Phi}\Big( N \Big( \frac{\alpha}{2\pi} \log \frac{m}{n} - \ell \Big) \Big).
$$
Here, we employ the Poisson summation formula. Since $r$ and $\widehat{\Phi}$ are always positive, we can neglect the terms with $ \ell \neq 0$, obtaining that
$$
S_1 \ge \frac{N}{\log N} \sum_{m,n \in \mathcal{M}} r(m)r(n) \sum_{\ell \in \mathbb{Z}} \widehat{\Phi}\Big( N \Big( \frac{\alpha}{2\pi} \log \frac{m}{n}\Big) \Big).
$$
Plainly, by \eqref{intRPhi}
\begin{equation}
    \label{S1lowerbound}
    S_1 \ge \int_{\mbr}|R(t)|^2\Phi \Big( \frac{t \log N}{N} \Big) \mathrm{d}t \gg \frac{N}{\log N}\sum_{m \in \mcm^{\prime}}r(m)^2.
\end{equation}
To obtain the lower bound for $S_2$, we employ a process analogous to that used above. We have
\begin{align*}
    S_2 & \, = \frac{N}{\log N} \sum_{n \le \alpha N} \sum_{m,h \in \mathcal{M}} \frac{(\log n)^j}{n^\sigma}r(m)r(h) \sum_{\ell \in \mathbb{Z}} \widehat{\Phi}\Big( N \Big( \frac{\alpha}{2\pi} \log \frac{m}{hn} - \ell \Big) \Big) \\
    & \, \geq \frac{N}{\log N} \sum_{n \le \alpha N} \sum_{m,h \in \mathcal{M}} \frac{(\log n)^j}{n^\sigma}r(m)r(h)  \widehat{\Phi}\Big( N \Big( \frac{\alpha}{2\pi} \log \frac{m}{hn} \Big) \Big). \\
    & \, =  \int_{\mbr}\sum_{n \le \alpha N}\frac{(\log n)^j}{n^{\sigma + it}}|R(t)|^2\Phi \Big(\frac{t \log N}{N} \Big)  \mathrm{d}t.
\end{align*}
Due to Proposition \ref{prop3.4}, for all $n \ge 1$, we derive the following lower bound for $S_2$
\begin{equation}
    \label{S2lowerbound}
    S_2 \ge \frac{N}{\log N} \exp \bigg(\Big(\delta \gamma \kappa^{1-\sigma} + o(1)\Big)\frac{\left(\log N  \right)^{1-\sigma} \left(\log_3 N\right)^{\sigma}}{\left(\log_2 N\right)^{\sigma}} \bigg)\sum_{m \in \mcm^{\prime}}r(m)^2.
\end{equation}
\par 
We next apply Lemma \ref{lemmaYangdaodao} to obtain the desired large value. First, we notice that
\begin{align*}
    \mce_1 & \, \coloneqq \sum_{|\ell| < \sqrt{N}} \sum_{n \le \alpha N}\frac{(\log n)^j}{n^{\sigma + i \alpha \ell}} |R(\alpha \ell)|^2\Phi \Big(\frac{\ell \log N}{N} \Big) \\
    & \, \le R(0)^2 \sum_{|\ell| < \sqrt{N}} \bigg|\sum_{n \le \alpha N}\frac{(\log n)^j}{n^{\sigma+ i \alpha \ell}} \bigg| \\
    & \, \le N^\kappa \bigg(\sum_{|\ell| < \sqrt{N}} |\zeta^{(j)}(\sigma + i\alpha \ell)| + O\Big( \frac{j!}{\varepsilon^j} \cdot N^{-\sigma + \varepsilon} \Big) \bigg)\sum_{m \in \mcm^{\prime}}r(m)^2.
\end{align*}
Combining Lemma \ref{lemmameansquare} and the Cauchy-Schwarz inequality, we can bound $\mce_1$ by
\begin{equation}
    \label{mce1upper}
    \mce_1 \ll N^{\frac{1}{2}+\kappa}\sum_{m \in \mcm^{\prime}}r(m)^2.
\end{equation}
Furthermore, as $\Phi$ decays rapidly, it follows that
\begin{equation}
    \label{mce2upper}
    \mce_2 \coloneqq  \sum_{|\ell| > N} \sum_{n \le \alpha N}\frac{(\log n)^j}{n^{\sigma + i \alpha \ell}} |R(\alpha \ell)|^2\Phi \Big(\frac{\ell \log N}{N} \Big) = o \Big(\sum_{m \in \mcm^{\prime}}r(m)^2 \Big).
\end{equation}
Employing Lemma \ref{lemmaYangdaodao} again, we obtain that
\begin{align*}
    &\sum_{\sqrt{N} \le |\ell| \le N} \bigg((-1)^j \zeta^{(j)}(\sigma + i\alpha \ell)|R(\alpha \ell)|^2\Phi \Big(\frac{\ell \log N}{N} \Big) - \sum_{n \le \alpha N}\frac{(\log n)^j}{n^{\sigma + i \alpha \ell}} |R(\alpha \ell)|^2\Phi \Big(\frac{\ell \log N}{N} \Big)\bigg) \\
    & \, \ll R(0)^2 \sum_{\sqrt{N} \le |\ell| \le N}\frac{j!}{\varepsilon^j}(\alpha N)^{-\sigma+\varepsilon} \ll N^{1-\sigma + \kappa + \varepsilon}\sum_{m \in \mcm^{\prime}}r(m)^2.
\end{align*}
\par
Noting that 
$$
S_2 = \sum_{\sqrt{N} \le |\ell| \le N}\sum_{n \le \alpha N}\frac{(\log n)^j}{n^{\sigma + i \alpha \ell}} |R(\alpha \ell)|^2\Phi \Big(\frac{\ell \log N}{N} \Big) + \mce_1 + \mce_2, 
$$
we arrive at
$$
\sum_{\sqrt{N} \le |\ell| \le N} (-1)^j \zeta^{(j)}(\sigma + i\alpha \ell)|R(\alpha \ell)|^2\Phi \Big(\frac{\ell \log N}{N} \Big) = S_2 +O(N^{\frac{1}{2}+\kappa}+N^{1-\sigma + \kappa + \varepsilon})\sum_{m \in \mcm^{\prime}}r(m)^2.
$$
From \eqref{S1lowerbound}, we deduce that
\begin{equation}
    \label{maxzetaj}
    \max_{\sqrt{N} \le \ell \le N} |\zeta^{(j)}(\sigma + i\alpha \ell)| \ge \frac{S_2}{S_1} + O\Big(\frac{N^{\frac{1}{2}+\kappa}+N^{1-\sigma + \kappa + \varepsilon}}{N/\log N} \Big).
\end{equation}
Regarding the ratio $S_2$ and $S_1$, it follows from \eqref{S1upperbound} and \eqref{S2lowerbound} that
\begin{align}
        \label{S2S2ratio}
    \frac{S_2}{S_1}& \,\ge \frac{1}{(\log N)^2}\exp \bigg(\Big(\delta \gamma \kappa^{1-\sigma} + o(1)\Big)\frac{\left(\log N  \right)^{1-\sigma} \left(\log_3 N\right)^{\sigma}}{\left(\log_2 N\right)^{\sigma}} \bigg) \nonumber \\
    & \, \ge \exp \bigg(\Big(\delta \gamma \kappa^{1-\sigma}e^{-A} + o(1)\Big)\sqrt{\frac{\log N \log_3 N}{\log_2 N}} \bigg).
\end{align}
Setting $\kappa< 1/2$, the error term in \eqref{maxzetaj} is $o(1)$. Note that
$$
\Big(\frac{1}{2}\Big)^{1-\sigma}=\Big(\frac{1}{2}\Big)^{1-\sigma_A}=\frac{1}{\sqrt{2}}+o(1).
$$
Then, by choosing $\kappa$, $\delta$ and $\gamma$ to be sufficiently close, respectively, to $1/2$, $1$ and $(e-1)^{-1}$, and for sufficiently large $N$, we can combine \eqref{maxzetaj} and \eqref{S2S2ratio} to obtain
\begin{equation*}
        \max_{\sqrt{N} \le \ell \le N} |\zeta^{(j)}(\sigma + i \alpha \ell)| \ge \exp\bigg(\Big(\frac{1}{\sqrt{2}(e-1)e^A}+o(1)\Big) \sqrt{\frac{\log N  \log_3 N }{\log_2N }}\bigg).
\end{equation*}

\section{Proof of the Theorem \ref{thm2}}
\subsection{Constructing the resonator} \label{secconres}
In this section, we set $\sigma \coloneqq \sigma^{\prime}_A = 1 - A/(\log_2 N).$
As in \cite{soundararajan2008extreme}, let $\Phi \,\colon \mbr \to \mbr$ denote a smooth function compactly supported in $[1,2]$, with $0 \le \Phi(t) \le 1$ for all $t$, and $\Phi(t) =1 $ for $t \in [5/4, 7/4]$. Integrating by parts, we deduce that $\widehat{\Phi}(\xi) \ll_\nu |\xi|^{-\nu}$ for any $\nu \in \mbn$.   
\par
We define the function $r \, \colon \mbn \to \{0,1\}$ to be the characteristic function of a specific set $\mcm$, coincides with that in \cite[Section 6.1]{dong2023Onde}. Then, we obtain the following conclusion, which serves as a key ingredient in the proof of Theorem \ref{thm2}.
\begin{pro}[\cite{dong2023Onde}, Proposition 1]
     \label{pro4.1}
     If $N$ is sufficiently large, uniformly for all positive integers $j \le (\log_3 N)/(\log_4 N), $ we have
    \begin{equation*}
        \Bigg| \sum_{mk = n \leq \sqrt{N}} \frac{r(m)\overline{r(n)}}{k^\sigma} (\log k)^j \Bigg| \bigg/ \left( \sum_{n \leq \sqrt{N}} |r(n)|^2 \right) \geq (D_j(A) + o(1)) (\log_2 N)^{j + 1}.
    \end{equation*}
\end{pro}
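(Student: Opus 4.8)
Since $r$ is the $\{0,1\}$-valued indicator of the set $\mcm$ built in \cite[Section 6.1]{dong2023Onde} --- a set of integers of size at most about $\sqrt N$ whose prime factors all lie in a short range near $\log N\log_2 N$ and occur with bounded multiplicity --- the plan is to read the left-hand side as a positive double sum and decouple the two variables. Writing $n=mk$ and noting that every summand is nonnegative, the numerator equals
\[
\sum_{m\in\mcm}\ \sum_{\substack{k\,:\,mk\in\mcm\\ mk\le\sqrt N}}\frac{(\log k)^{j}}{k^{\sigma}} .
\]
First I would discard those $m\in\mcm$ whose size is within a factor $\exp\big((\log_2 N)^{2}\big)$ of the top of the range; by the behaviour of the Dickman function near the edge of the smooth interval --- the same input that yields the lower bound for $|\mcm|$ used in \cite{dong2023Onde} --- these form an $o(1)$-proportion of $\mcm$. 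For each retained $m$ the inner sum over $k$ agrees, up to a factor $1+o(1)$, with the unrestricted sum
\[
S_j:=\sum_{k}\frac{(\log k)^{j}}{k^{\sigma}}
\]
over the multiplicatively defined smooth set underlying $\mcm$: that set is essentially closed under multiplication by the small $k$ which dominate $S_j$, the constraint $mk\le\sqrt N$ is then vacuous on that range of $k$, and the contribution of $k$ with $\log k>(\log_2 N)^{2}$ is $o(S_j)$ by the rapid decay of $\rho$. Since $|\mcm|=\sum_{n}|r(n)|^{2}$, this reduces the proposition to the asymptotic $S_j=(D_j(A)+o(1))(\log_2 N)^{j+1}$.

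To evaluate $S_j$ with $\sigma=\sigma^{\prime}_A=1-A/\log_2 N$, write $k^{-\sigma}=k^{-1}\exp\big(A\log k/\log_2 N\big)$ and substitute $u=\log k/\log Y$, where $Y$ is the smoothness parameter, so that $\log Y=(1+o(1))\log_2 N$. Then $(\log k)^{j}=(u\log Y)^{j}$ and $\exp(A\log k/\log_2 N)=e^{Au}\big(1+o(1)\big)$ on the range that matters, and partial summation against the uniform smooth-number asymptotic $\Psi(x,Y)=x\rho(u)(1+o(1))$ of Hildebrand and Saias --- valid far beyond the range $u\ll(\log_2 N)^{\varepsilon}$ which, by $\rho(u)=u^{-u(1+o(1))}$, already carries all but $o(1)$ of the mass --- turns $S_j$ into
\[
(\log Y)^{j+1}\int_{0}^{\infty}u^{j}e^{Au}\rho(u)\,\mathrm{d}u\,\big(1+o(1)\big)=(D_j(A)+o(1))(\log_2 N)^{j+1},
\]
the integral converging for each fixed $A$ for the same reason. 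Substituting this into the reduction of the previous paragraph gives the claimed bound.

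The delicate points are the decoupling and, above all, the uniformity in $j$, which I expect to be the real obstacle. One must keep every error term below $D_j(A)(\log_2 N)^{j+1}$ simultaneously for all $j\le(\log_3 N)/(\log_4 N)$: the competing quantities are of size $(\log_2 N)^{j}$ times factors growing no faster than $j^{Cj}$ or $j!$ (these enter through the tail of the $k$-sum, through the count of discarded \emph{large} $m$, and through the replacement of $(\log Y)^{j+1}$ by $(\log_2 N)^{j+1}$), and it is precisely the hypothesis on $j$ that forces each of these ratios to $0$ --- but this must be checked with $j$-explicit constants, not merely asserted. For the decoupling one has to verify that dropping both constraints $mk\in\mcm$ and $mk\le\sqrt N$ costs only a factor $1+o(1)$, and here the cleanest route is to invoke the structural lemmas on $\mcm$ (multiplicity bounds, the prime-range definition, the lower bound for $|\mcm|$) already proved in \cite{dong2023Onde}, so that the whole argument becomes the $\sigma$-shifted analogue of the computation underlying \cite[Theorem 1]{yang2024extreme}.
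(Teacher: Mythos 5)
The paper itself gives no proof of this proposition: it is imported verbatim from \cite{dong2023Onde} with a pointer to \cite[Section 3]{yang2024extreme}, and your reconstruction follows exactly that standard template (decouple the $m$- and $k$-variables after discarding the $m$ near the top of the range, evaluate the resulting smooth $k$-sum via $\Psi(x,Y)=x\rho(u)(1+o(1))$ together with $k^{-\sigma}=k^{-1}e^{Au(1+o(1))}$, and then verify uniformity in $j$ using $j\le(\log_3N)/(\log_4N)$), so in substance you are doing the same thing as the cited source. One small correction: the resonator set $\mcm$ here consists of $Y$-smooth numbers up to $\sqrt N$ (all prime factors at most $Y\approx\log N\log_2N$), not numbers whose prime factors lie in a short window near $Y$ as in the Bondarenko--Seip construction used for Theorem \ref{thm1}; your subsequent computation with $\Psi(x,Y)$ and the Dickman function implicitly assumes the former, which is the correct one, so this is a slip of description rather than of substance.
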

\begin{proof}
    The proof can be found in \cite[Section 6.1]{dong2023Onde}, and is similar to that in \cite[Section 3]{yang2024extreme}.
\end{proof}
Set $M = \lfloor N^{1/2} \rfloor$. Finally, we define the resonator by
$$
R(t) = \sum_{m \le M}r(m)m^{-it}.
$$
\subsection{Proof of Theorem \ref{thm2}}
Let $\Phi$ and $R(t)$ be as in \ref{secconres},and define the following two sums
$$
G_1 \coloneqq G_1(R,N) =  \sum_{\ell \in \mbz} |R(\alpha \ell)|^2 \Phi \Big(\frac{\ell}{N} \Big),
$$
and 
$$
G_2 \coloneqq G_2(R,N) =\sum_{\ell \in \mbz} \sum_{n \le 2\alpha N} \frac{(\log n )^j}{n^{\sigma + i \alpha \ell}}|R(\alpha \ell)|^2 \Phi \Big(\frac{\ell}{N} \Big).
$$
\par
It follows from \cite[Eq. (2)]{soundararajan2008extreme} that the following bound holds for $G_1$
\begin{equation}
  \label{G1upperbound}
    G_1  =(1+o(1)) \int_{\mbr} |R(\alpha t)|^2 \Phi\Big( \frac{t}{N} \Big) \mathrm{d}t = N \widehat{\Phi}(0) (1 + o(1)) \sum_{m \leq M} |r(m)|^2
\end{equation}
\par
As before, we expand $|R(\alpha \ell)^2|$ and apply the Poisson summation formula to obtain that
\begin{align*}
    G_2 & \, = N \sum_{k \le 2\alpha N} \sum_{m,n \leq M} \frac{(\log k)^j}{k^\sigma}r(m)\ol{r(n)} \sum_{\ell \in \mathbb{Z}} \widehat{\Phi}\Big( N \Big(\ell - \frac{\alpha}{2\pi} \log \frac{n}{km}\Big) \Big) \\
    & \ge N \sum_{k \le 2\alpha N} \sum_{m,n \leq M} \frac{(\log k)^j}{k^\sigma}r(m)\ol{r(n)}\widehat{\Phi}\Big(-N\frac{\alpha}{2\pi} \log \frac{n}{km}\Big).
\end{align*}
Due to the rapid decay of $\Phi$, for the off-diagonal terms where $m \neq kn$, we get
$$
\widehat{\Phi}\Big(-N\frac{\alpha}{2\pi} \log \frac{n}{km}\Big) \ll N^{-2}
$$
by the fact that $M \le N^{1/2}$. Thus, 
\begin{equation}
    \label{G2lowerbound}
    G_2 \ge N \widehat{\Phi}(0) \sum_{mk=n \leq M}\frac{r(m)\ol{r(n)}}{k^\sigma}(\log k)^j + o(1)\sum_{m \le M}|r(m)|^2.
\end{equation}
Observe that
$$
\sum_{\ell \in \mbz}\frac{j!}{\varepsilon^j}N^{-\sigma+\varepsilon} |R(\alpha \ell)|^2 \Phi \Big(\frac{\ell}{N} \Big) \ll \frac{j!}{\varepsilon^j}N^{-\sigma+\varepsilon}G_1.
$$
Then, Lemma \ref{lemmaYangdaodao} yields that
\begin{align*}
    & \,\max_{\sqrt{N} \le \ell \le N}|\zeta^{(j)}(\sigma + i\alpha \ell)| \ge \frac{|G_2|}{G_1} + O\Big(\frac{j!}{\varepsilon^j}N^{-\sigma+\varepsilon} \Big)   \\
    & \, \ge \Big(1+O(N^{-1})\Big)\Bigg| \sum_{mk = n \leq \sqrt{N}} \frac{r(m)\overline{r(n)}}{k^\sigma} (\log k)^j \Bigg| \bigg/ \left( \sum_{n \leq \sqrt{N}} |r(n)|^2 \right) + O((\log_2 N)^j).
\end{align*}
For sufficiently large $N$, the error term can be estimated using Stirling’s formula for all positive integers $j \le (\log_3 N)/(\log_4 N)$. The proof of Theorem \ref{thm2} can then be completed immediately by applying Proposition \ref{pro4.1}.

\section*{Acknowledgments}
Qiyu Yang was supported by the Natural Science Foundation of Henan Province (Grant No. 252300421782).
	
	\bibliographystyle{siam}
    \bibliography{reference}
\end{document}